\newtheorem{theorem}{Theorem}[section]
\newtheorem{lemma}[theorem]{Lemma}
\newtheorem{remark}[theorem]{Remark}
\newtheorem{definition}[theorem]{Definition}
\begin{document}
\setcounter{page}{1}
\title{Eigenvalue comparisons in Steklov eigenvalue problem and some other eigenvalue estimates}
\author{Yan Zhao$^{1}$,~~Chuanxi Wu$^{1}$,~~Jing Mao$^{1,2,\ast}$,~~Feng Du$^{3}$}
\date{}
\protect\footnotetext{\!\!\!\!\!\!\!\!\!\!\!\!{$^{\ast}$Corresponding
author}\\
{MSC 2010:} 35P15, 53C20.
\\
{Key Words:} Steklov eigenvalue problem, Laplacian, eigenvalues,
spherically symmetric manifolds, Wentzell eigenvalue problem.}
\maketitle ~~~\\[-15mm]
\begin{center}{\footnotesize
$^{1}$Faculty of Mathematics and Statistics, \\
Key Laboratory of Applied Mathematics of Hubei Province, \\
Hubei University, Wuhan 430062, China\\
$^{2}$ Department of Mathematics, Instituto Superior T\'{e}cnico, University of Lisbon,\\
Av. Rovisco Pais, 1049-001 Lisbon, Portugal\\
 Emails: jiner120@163.com, jiner120@tom.com\\
 $^{3}$School of Mathematics and Physics
Science,\\
Jingchu University of Technology, Jingmen, 448000, China}
\end{center}

\begin{abstract}
In this paper, two interesting eigenvalue comparison theorems for
the first non-zero Steklov eigenvalue of the Laplacian have been
established for manifolds with radial sectional curvature bounded
from above. Besides, sharper bounds for the first non-zero
eigenvalue of the Wentzell eigenvalue problem of the weighted
Laplacian, which can be seen as a natural generalization of the
classical Steklov eigenvalue problem, have been obtained.

\end{abstract}

\markright{\sl\hfill  Y. Zhao, C.-X. Wu, J. Mao, F. Du\hfill}

\section{Introduction}
\renewcommand{\thesection}{\arabic{section}}
\renewcommand{\theequation}{\thesection.\arabic{equation}}
\setcounter{equation}{0} \setcounter{maintheorem}{0}

Let $(M,g)$ be an $n$-dimensional $(n\geq2)$ complete Riemannian
manifold with the Riemannian metric $g$, and let $\Omega\subseteq M$
be a compact domain with boundary $\partial\Omega$. The so-called
\emph{Steklov eigenvalue problem} is actually to find a solution of
the following system
\begin{eqnarray} \label{SEP}
\left\{
\begin{array}{ll}
\Delta\varphi=0  \qquad\quad & {\rm{in}}~\Omega,\\
\frac{\partial\varphi}{\partial\vec{\eta}}=v\varphi \qquad\quad &
{\rm{on}}~\partial\Omega,
\end{array}
\right.
\end{eqnarray}
where $\Delta$ is the Laplacian on $M$ associated with the metric
$g$, $\vec{\eta}$ is the unit outward normal vector field of the
boundary $\partial\Omega$, and $v$ is a real number called the
\emph{eigenvalue} of this problem. There are infinitely many real
numbers $v$ satisfying the system (\ref{SEP}) and can be listed
increasingly as a sequence tending to the infinity. Clearly, the
first Steklov eigenvalue of the problem (\ref{SEP}) is zero with the
constant function as its eigenfunction. By the variational
principle, it is not difficult to get the first non-zero Steklov
eigenvalue $v_{1}(\Omega)$ is characterized by
\begin{eqnarray} \label{cha}
v_{1}(\Omega)=\min\limits_{\int_{\partial\Omega}u=0}\frac{\int_{\Omega}|\nabla
u|^2}{\int_{\partial\Omega}u^2},
\end{eqnarray}
where $\nabla$ is the gradient operator on $\Omega\subseteq M$, and
$u\in W^{1,2}(\Omega)$, the completion of the set of smooth
functions under the Sobolev norm $\|
u\|_{1,2}=\int_{\Omega}u^{2}+\int_{\Omega}|\nabla u|^{2}$.
 The problem (\ref{SEP}) was introduced by
Steklov \cite{mws} with the physical background as follows: the
function $\varphi$ denotes the steady state temperature on $\Omega$
such that the flux on $\partial\Omega$ is proportional to the
temperature. Since the set of eigenvalues for the Steklov eigenvalue
problem is the same as the set of eigenvalues of the well-known
Dirichlet-to-Neumann map, the problem (\ref{SEP}) has important
influence in the study of conductivity and harmonic analysis, which
was initially studied by Calder\'{o}n \cite{apc}. Anyway, Escobar
\cite{e1} showed that the study of (\ref{SEP}) is also important in
the problem of conformal deformation of a Riemannian metric on
manifolds with boundary.

By (\ref{cha}), it is easy to get the following Sobolev trace
inequality
\begin{eqnarray*}
\int_{\partial\Omega}|u-u_{0}|^{2}\leq\frac{1}{v_{1}(\Omega)}\int_{\Omega}|\nabla
u|^{2},
\end{eqnarray*}
where $u_{0}$ is the mean  value of the function $u$ when restricted
to the boundary. This inequality makes an important role in the
study of existence and regularity of solutions of some boundary
value problems.

In order to state our main conclusions below clearly,  here we would
like to introduce some basic notions, which have been introduced in
\cite{fmi,jm1,jm2,mdw} already. Besides, in the sequel, for
convenience, \emph{we will drop the integral measures for all
integrals except it is necessary}.

\subsection{Basic notions}

As before, let $(M,g)$ be an $n$-dimensional ($n\geq2$) complete
Riemannian manifold with the metric $g$, and $\nabla$ be the
gradient operator. For a point $p\in M$, one can set up a geodesic
polar coordinates $(t,\xi)$ around this point $p$, where
$\xi\in{S}_{p}^{n-1}\subseteq{T_{p}M}$ is a unit vector of the unit
sphere $S_{p}^{n-1}$ with center $p$ in the tangent space $T_{p}M$.
Let $\mathcal{D}_{p}$, a star shaped set of $T_{p}M$, and $d_{\xi}$
be defined by
\begin{eqnarray*}
\mathcal{D}_{p}=\{t\xi|~0\leq{t}<d_{\xi},~\xi\in{S^{n-1}_{p}}\},
\end{eqnarray*}
and
\begin{eqnarray*}
d_{\xi}=d_{\xi}(p):=  \sup\{t>0|~\gamma_{\xi}(s):= \exp_p(s\xi)~
{\rm{is~ the~ unique~ minimal~ geodesic ~joining} }~ p ~{\rm{and}}
~\gamma_{\xi}(t)\}
\end{eqnarray*}
respectively. Then $\exp_p:\mathcal{D}_p \to M\backslash Cut(p)$
gives a diffeomorphism from $\mathcal{D}_p$ onto the open set
$M\backslash Cut(p)$, with $Cut(p)$ the cut locus of  $p$. For
$\zeta\in{\xi^{\bot}}$, one can define the path of linear
transformations $\mathbb{A}(t,\xi):\xi^\perp\rightarrow{\xi^\perp}$
by
 \begin{eqnarray*}
\mathbb{A}(t,\xi)\zeta=(\tau_{t})^{-1}Y(t),
 \end{eqnarray*}
with $\xi^\perp$ the orthogonal complement of $\{\mathbb{R}\xi\}$ in
$T_{p}M$, where $\tau_{t}:T_{p}M\rightarrow{T_{\exp_{p}(t\xi)}M}$ is
the parallel translation along the geodesic $\gamma_{\xi}(t)$ with
$\gamma'(0)=\xi$, and $Y(t)$ is the Jacobi field along
$\gamma_{\xi}$ satisfying $Y(0)=0$, $(\triangledown_{t}Y)(0)=\zeta$.
Moreover, set
\begin{eqnarray*}
\mathcal{R}(t)\zeta=(\tau_{t})^{-1}R(\gamma'_{\xi}(t),\tau_{t}\zeta)
\gamma'_{\xi}(t),
\end{eqnarray*}
where the curvature tensor $R(X,Y)Z$ is defined by
$R(X,Y)Z=-[\nabla_{X},$ $ \nabla_{Y}]Z+ \nabla_{[X,Y]}Z$. Then
$\mathcal{R}(t)$ is a self-adjoint operator on $\xi^{\bot}$, whose
trace is the radial Ricci tensor
$$\mathrm{Ric}_{\gamma_{\xi}(t)}(\gamma'_{\xi}(t),\gamma'_{\xi}(t)).$$
Clearly, the map $\mathbb{A}(t,\xi)$ satisfies the Jacobi equation
 $\mathbb{A}''+\mathcal{R}\mathbb{A}=0$ with initial conditions
 $\mathbb{A}(0,\xi)=0$, $\mathbb{A}'(0,\xi)=I$, and  by Gauss's lemma, the Riemannian metric of $M$ can be
 expressed by
 \begin{eqnarray} \label{2.1}
 ds^{2}(\exp_{p}(t\xi))=dt^{2}+|\mathbb{A}(t,\xi)d\xi|^{2}
 \end{eqnarray}
on the set $\exp_{p}(\mathcal{D}_p)$. Consider the metric components
$g_{ij}(t,\xi)$, $i,j\geq 1$, in a coordinate system $\{t, \xi_a\}$
formed by fixing  an orthonormal basis $\{\zeta_a, a\geq 2\}$ of
 $\xi^{\bot}=T_{\xi}S^{n-1}_p$, and extending it to a local frame $\{\xi_a, a\geq2\}$ of
$S_p^{n-1}$. Define
 a function $J>0$ on $\mathcal{D}_{p}$ by
\begin{equation*}
J^{n-1}=\sqrt{|g|}:=\sqrt{\det[g_{ij}]}.
\end{equation*}
 Since
 $\tau_t: S_p^{n-1}\to S_{\gamma_{\xi}(t)}^{n-1}$ is an
isometry, we have
$$
g\left( d(\exp_p)_{t\xi}(t\zeta_{a}),
d(\exp_p)_{t\xi}(t\zeta_{b})\right)=g\left(
\mathbb{A}(t,\xi)(\zeta_{a}), \mathbb{A}(t,\xi)(\zeta_{b})\right),
$$
and
$$ \sqrt{|g|}=\det\mathbb{A}(t,\xi).$$
So, by (\ref{2.1}), the volume $V(B(p,r))$ of the geodesic ball
$B(p,r)$ on $M$ is given by
\begin{eqnarray} \label{2.2}
V(B(p,r))=\int_{S_{p}^{n-1}}\int_{0}^{\min\{r,d_{\xi}\}}\sqrt{|g|}dtd\sigma=\int_{S_{p}^{n-1}}\left(\int_{0}^{\min\{r,d_{\xi}\}}\det(\mathbb{A}(t,\xi))dt\right)d\sigma,
\end{eqnarray}
where $d\sigma$ denotes the $(n-1)$-dimensional volume element on
$\mathbb{S}^{n-1}\equiv S_{p}^{n-1}\subseteq{T_{p}M}$. The
injectivity radius $\mathrm{inj}(p)$ at $p$ satisfies
 \begin{eqnarray*}
\mathrm{inj}(p):=d(p,Cut(p))=\min_{\xi}d_{\xi}.
 \end{eqnarray*}
 In general, we have
$B(p,\mathrm{inj}(p))\subseteq{M}\backslash{Cut(p)}$. Besides, for
$r<\mathrm{inj}(p)$, by (\ref{2.2}) we can obtain
\begin{eqnarray*}
V(B(p,r))=\int_{0}^{r}\int_{S_{p}^{n-1}}\det(\mathbb{A}(t,\xi))d\sigma{dt}.
\end{eqnarray*}
Denote by $r(x)=d(x,p)$ the intrinsic distance to the point
$p\in{M}$. Then, by the definition of a non-zero tangent vector
``\emph{radial}" to a prescribed point on a manifold given in the
first page of \cite{KK}, we know that for
$x\in{M}\backslash(Cut(p)\cup{p})$ the unit vector field
\begin{eqnarray*}
v_{x}:=\nabla{r(x)}
\end{eqnarray*}
is the radial unit tangent vector at $x$. This is because for any
$\xi\in{S}_{p}^{n-1}$ and $t_{0}>0$, we have
$\nabla{r}{(\gamma_{\xi}(t_{0}))}=\gamma'_{\xi}(t_{0})$ when the
point $\gamma_{\xi}(t_{0})=\exp_{p}(t_{0}\xi)$ is away from the cut
locus of $p$.

We need the following concepts.

\begin{definition} (\cite{fmi,jm1,jm2}) \label{def1}
 Given a continuous function
$k:[0,l)\rightarrow \mathbb{R}$, we say that $M$ has  a radial Ricci
curvature lower bound $(n-1)k$ along any unit-speed minimizing
geodesic starting from a point $p\in{M}$ if
\begin{eqnarray}  \label{2.4}
{\rm{Ric}}(v_x,v_x)\geq(n-1)k(r(x)), ~~\forall x\in
M\backslash\left(Cut(p)\cup\{p\}\right),
\end{eqnarray}
where ${\rm{Ric}}$ is the Ricci curvature of $M$.
\end{definition}

\begin{definition}(\cite{fmi,jm1,jm2})  \label{def2}
Given a continuous function $k:[0,l)\rightarrow \mathbb{R}$, we say
that $M$ has a radial sectional curvature upper bound $k$ along any
unit-speed minimizing geodesic starting from a point $p\in{M}$ if
\begin{eqnarray} \label{2.5}
\mathcal{K}(v_{x},V)\leq{k(r(x))},  ~~\forall x\in
M\backslash\left(Cut(p)\cup\{p\}\right),
\end{eqnarray}
where $V\perp{v_{x}}$, $V\in{S^{n-1}_{x}}\subseteq{T_{x}M}$, and
$\mathcal{K}(v_{x},V)$ is the sectional curvature of the plane
spanned by $v_x$ and $V$.
\end{definition}

\begin{remark} \label{remark2-3}
\rm{Since $r(x)=d(p,x)=t$ and $\frac{d}{dt}|_{x}=\nabla{r(x)}=v_x$,
we know that the inequalities (\ref{2.4}) and (\ref{2.5}) become
${\rm{Ric}}(\frac{d}{dt},\frac{d}{d{t}}) \geq(n-1)k(t)$ and
$\mathcal{K}(\frac{d}{dt},V)\leq{k(t)}$, respectively. This fact has
been pointed out in \cite[Remark 2.4]{fmi} or \cite[Remark
2.1.5]{jm1}. Besides, for convenience, if a manifold satisfies
(\ref{2.4}) (resp., (\ref{2.5})), then we say that $M$ has \emph{a
radial Ricci curvature lower bound w.r.t. a point $p$} (resp.,
\emph{a radial sectional curvature upper bound w.r.t. a point $p$}),
that is to say, its \emph{radial Ricci curvature is bounded from
below w.r.t. $p$} (resp., \emph{radial sectional curvature is
bounded from above w.r.t. $p$}). As also pointed out in \cite[page
706]{fmi} or \cite[page 378]{jm2}, for a given complete Riemannian
$n$-manifold $(M,g)$, one can define
\begin{eqnarray*}
k_{-}(p,t):=\min\limits_{\{\xi|(t,\xi)\in\overline{\mathbb{D}}_{p}\}}
\frac{{\rm{Ric}}_{\gamma_{\xi}(t)}\left(\frac{d}{dt}|_{\exp_{p}(t\xi)},\frac{d}{dt}|_{\exp_{p}(t\xi)}\right)}{n-1},
\qquad 0\leq t<l(p):=\max_{\xi}d_{\xi},
\end{eqnarray*}
and
 \begin{eqnarray*}
k_{+}(p,t):=\max_{\{(\xi,V)|
\gamma'_{\xi}(t)\perp{V},|V|=1\}}\mathcal{K}_{\gamma_{\xi}(t)}\left(\frac{d}{dt}\Big{|}_{\exp_{p}(t\xi)},V\right),
\qquad 0\leq t<\mathrm{inj}(p),
\end{eqnarray*}
where
$\mathbb{D}_{p}:=\{(t,\xi)\in[0,\infty)\times{S_{p}^{n-1}}|0\leq
t<d_{\xi}\}$ with closure
$\overline{\mathbb{D}}_{p}=\{(t,\xi)\in[0,\infty)\times{S_{p}^{n-1}}|0\leq
t\leq d_{\xi}\}$, and, by applying the uniform continuity of
continuous functions on compact sets, for a bounded domain
$\Omega\subseteq{M}$, one can always find \emph{optimally
continuous} bounds $k_{\pm}(p,t)$ for the radial sectional and Ricci
curvatures w.r.t. some point $p\in\Omega$. }
\end{remark}

We need the following notion of spherically symmetric manifolds.

\begin{definition} (\cite{fmi,jm1,jm2}) \label{defsp}
A domain
$\Omega=\exp_{p}([0,l)\times{S_{p}^{n-1}})\subset{M}\backslash
Cut(p)$, with $l<\mathrm{inj}(p)$, is said to be spherically
symmetric with respect to a point $p\in\Omega$, if the matrix
$\mathbb{A}(t,\xi)$ satisfies
$\mathbb{A}(t,\xi)=f(t)\mathrm{I}_{(n-1)\times(n-1)}$, for a
function $f\in{C^{2}([0,l])}$, $l\in(0,\infty]$ with $f(0)=0$,
$f'(0)=1$, $f|(0,l)>0$, where $\mathrm{I}_{(n-1)\times(n-1)}$
represents the $(n-1)\times(n-1)$ identity matrix.
\end{definition}

By (\ref{2.1}), on the set
 $\Omega$ given in Definition \ref{defsp} the Riemannian metric of $M$ can be
 expressed by
 \begin{eqnarray} \label{2.3}
 ds^{2}(\exp_{p}(t\xi))=dt^{2}+f^{2}(t)|d\xi|^{2}, \qquad
 \xi\in{S_{p}^{n-1}}, \quad 0\leq{t}<l,
 \end{eqnarray}
 with $|d\xi|^{2}$ the round metric on the unit sphere $\mathbb{S}^{n-1}\subseteq\mathbb{R}^{n}$.
 Spherically symmetric manifolds were named as generalized space
forms
 by Katz and Kondo \cite{KK}, and a standard model for
such manifolds is given by the quotient manifold of the warped
product $[0,l)\times_{f} \mathbb{S}^{n-1}$ equipped with the metric
(\ref{2.3}),  and all pairs $(0,\xi)$ are identified with a single
point $p$, where $f$ satisfies the conditions in Definition
\ref{defsp}, and is called \emph{the warping function}.  A space
form with constant curvature $k$ is also a spherically symmetric
manifold, and in this special case we have
\begin{eqnarray*}
f(t)=\left\{
\begin{array}{llll}
\frac{\sin\sqrt{k}t}{\sqrt{k}}, & \quad  l= \frac{\pi}{\sqrt{k}}
  & \quad k>0,\\
 t, &\quad l=+\infty & \quad k=0, \\
\frac{\sinh\sqrt{-k}t}{\sqrt{-k}}, & \quad l=+\infty  &\quad k<0.
\end{array}
\right.
\end{eqnarray*}

\subsection{The statement of main conclusions} \label{sub1.2}

In \cite{fmi}, Freitas, Mao and Salavessa proved that for
$n$-dimensional ($n\geq2$) complete Riemannian manifold $(M,g)$
having a \emph{radial Ricci curvature lower bound $(n-1)k(t)$ w.r.t.
a point $p\in M$}, the first Dirichlet eigenvalue
$\lambda_{1}(B(p,r))$ of the Laplacian on $B(p,r)$ satisfies
 \begin{eqnarray} \label{e1}
\lambda_{1}(B(p,r))\leq \lambda_{1}(\mathcal{B}_{n}(p^{-},r)),
 \end{eqnarray}
where $\lambda_{1}(\mathcal{B}_{n}(p^{-},r))$ is the first Dirichlet
eigenvalue of the Laplacian on the geodesic ball
$\mathcal{B}_{n}(p^{-},r)$, with the center $p^{-}$ and radius $r$,
of the spherically symmetric $n$-manifold
$M^{-}=[0,l)\times_{f}\mathbb{S}^{n-1}$ with the base point $p^{-}$
and the warping function $f$ determined by
\begin{eqnarray} \label{ODE}
\left\{
\begin{array}{ll}
f''(t)+k(t)f(t)=0  \qquad\quad & {\rm{on}}~(0,l),\\
f(0)=0,~f'(0)=1.
\end{array}
\right.
\end{eqnarray}
Equality in (\ref{e1}) holds if and only if $B(p,r)$ is isometric to
$\mathcal{B}_{n}(p^{-},r)$. They also proved that if $M$ has a
\emph{radial sectional curvature upper bound $(n-1)k(t)$ w.r.t. a
point $p\in M$}, then for $r<\mathrm{inj}(p)$,
 \begin{eqnarray} \label{e2}
\lambda_{1}(B(p,r))\geq \lambda_{1}(\mathcal{B}_{n}(p^{+},r))
 \end{eqnarray}
holds, where $\lambda_{1}(\mathcal{B}_{n}(p^{+},r))$ is the first
Dirichlet eigenvalue of the Laplacian on the geodesic ball
$\mathcal{B}_{n}(p^{+},r)$, with the center $p^{+}$ and radius $r$,
of the spherically symmetric $n$-manifold
$M^{+}=[0,l)\times_{f}\mathbb{S}^{n-1}$ with the base point $p^{+}$
and $f$ determined by (\ref{ODE}). Equality in (\ref{e2}) holds if
and only if $B(p,r)$ is isometric to $\mathcal{B}_{n}(p^{+},r)$.
Clearly, the model spaces, i.e., spherically symmetric manifolds,
are determined by the curvature bounds. These eigenvalue estimates
improve the classical Cheng's eigenvalue comparison theorems
\cite{cheng1,cheng2}, whose model spaces are space forms of constant
curvature, a lot in the spectral geometry. Besides, Freitas, Mao and
Salavessa \cite[Section 6]{fmi} have used interesting examples
(i.e., torus, elliptic paraboloid, saddle) to intuitively and
numerically show that their comparisons are better than those of
Cheng's. The eigenvalue comparison (\ref{e1}) has been already
extended to the case of nonlinear $p$-Laplacian ($1<p<\infty$) - see
Mao \cite[Theorem 3.2]{jm2} for the detail.

It is interesting to know whether the system (\ref{ODE}) has \emph{a
long-time existence solution} (i.e., a positive solution on
$(0,\infty)$ and in this case $l=\infty$) or not. This has close
relationship with the oscillation theory of ordinary differential
equations (ODEs for short). Clearly, by Sturm-Picone comparison
theorem, one can easily get that $l=\infty$ if $k(t)\leq0$, while
$l<\infty$ if $k(t)\geq\alpha$ for some positive constant
$\alpha>0$. Readers can check \cite[Sect. 2.6, Chapt. 2]{jm1} for
the details about the general restrictions on $k(t)$ to get the
existence or non-existence of positive solution to (\ref{ODE}) on
$(0,\infty)$.

By using spherically symmetric manifolds as the model spaces also,
we can get the following eigenvalue comparisons for the first
non-zero Steklov eigenvalue for the system (\ref{SEP}) with $\Omega$
chosen as geodesic balls.

\begin{theorem}  \label{maintheorem-1}
For a given $n$-dimensional ($n=2,3$) complete Riemannian manifold
$(M,g)$ having a radial sectional curvature upper bound $k(t)$
w.r.t. $p$, where, as before, $t:=d(p,\cdot)$ represents the
distance to the point $p\in M$, and $k(t)$ is a continuous function
w.r.t. $t$, we have
\begin{eqnarray*}
v_{1}(B(p,r))\leq v_{1}(\mathcal{B}_{n}(p^{+},r)),
\end{eqnarray*}
where $r<\mathrm{inj}(p)$, and $\mathcal{B}_{n}(p^{+},r)$ is the
geodesic ball, with the center $p^{+}$ and radius $r$, of the
spherically symmetric $n$-manifold
$M^{+}=[0,l)\times_{f}\mathbb{S}^{n-1}$ with the base point $p^{+}$
and the warping function $f$ determined by (\ref{ODE}). Equality
holds if and only if $B(p,r)$ is isometric to
$\mathcal{B}_{n}(p^{+},r)$.
\end{theorem}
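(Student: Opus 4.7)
The plan is to apply the variational characterization (\ref{cha}) with a test function obtained by pulling back a first non-zero Steklov eigenfunction of the model ball $\mathcal{B}_n(p^+,r)$ through geodesic polar coordinates at $p$. On the spherically symmetric model $M^+$, separation of variables in (\ref{SEP}) yields a first non-zero eigenfunction of the form $\phi^+(t,\xi)=h(t)\Phi(\xi)$, where $\Phi(\xi)=\langle a_0,\xi\rangle$ is a first spherical harmonic on $\mathbb{S}^{n-1}$ (so that $\Phi^2+|\nabla^{\mathbb{S}}\Phi|^2\equiv 1$), and $h\in C^2([0,r])$ solves $h''+(n-1)(f'/f)h'-(n-1)h/f^2=0$ with $h(0)=0$ and $v_1(\mathcal{B}_n(p^+,r))=h'(r)/h(r)$; the relevant positive branch has $h>0$ and $h'>0$ on $(0,r]$. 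Since $r<\mathrm{inj}(p)$, $\exp_p$ diffeomorphs $\{t\xi:0\leq t<r,\,\xi\in S_p^{n-1}\}$ onto $B(p,r)\setminus\{p\}$, so I define the test function by $u(\exp_p(t\xi)):=h(t)\Phi_a(\xi)$ with $\Phi_a(\xi):=\langle a,\xi\rangle$ and $a\in S_p^{n-1}$ to be chosen. The admissibility condition $\int_{\partial B(p,r)}u\,dS=0$ becomes $\langle a,\mathbf{c}(r)\rangle=0$, where $\mathbf{c}(r):=\int_{S_p^{n-1}}\xi\det\mathbb{A}(r,\xi)\,d\sigma\in T_pM$; since $n\geq 2$ such $a$ always exists.

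Setting $A(t):=\int_{S_p^{n-1}}\Phi_a^2\det\mathbb{A}(t,\xi)\,d\sigma$ and $B(t):=\int_{S_p^{n-1}}\langle(\mathbb{A}^T\mathbb{A})^{-1}\nabla^{\mathbb{S}}\Phi_a,\nabla^{\mathbb{S}}\Phi_a\rangle\det\mathbb{A}(t,\xi)\,d\sigma$, the polar expression (\ref{2.1}) of the metric yields $\int_{B(p,r)}|\nabla u|^2=\int_0^r[h'(t)^2A(t)+h(t)^2B(t)]\,dt$ and $\int_{\partial B(p,r)}u^2=h(r)^2A(r)$. An integration by parts using the radial ODE for $h$ then produces
\[
R(u)-v_1(\mathcal{B}_n(p^+,r))=\frac{1}{h(r)^2A(r)}\left(\int_0^r h^2\bigl(B-\tfrac{n-1}{f^2}A\bigr)dt+\int_0^r hh'\bigl(\tfrac{(n-1)f'}{f}A-A'\bigr)dt\right),
\]
so by the variational principle the theorem reduces to proving that both correction integrals on the right are non-positive.

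The radial sectional curvature upper bound $k(t)$ drives both estimates. Rauch comparison forces each singular value of $\mathbb{A}(t,\xi)$ to satisfy $\mu_j(t,\xi)\geq f(t)$, giving $\det\mathbb{A}\geq f^{n-1}$ and the operator inequality $(\mathbb{A}^T\mathbb{A})^{-1}\leq f^{-2}\mathrm{Id}$ on $T_\xi S_p^{n-1}$, whence the pointwise bound $\langle(\mathbb{A}^T\mathbb{A})^{-1}\nabla^{\mathbb{S}}\Phi_a,\nabla^{\mathbb{S}}\Phi_a\rangle\leq(1-\Phi_a^2)/f(t)^2$. Moreover, Hessian comparison for $r(x)=d(p,x)$ yields the mean curvature bound $\partial_t\log\det\mathbb{A}(t,\xi)\geq(n-1)f'(t)/f(t)$, hence $A'(t)\geq\tfrac{(n-1)f'}{f}A(t)$; combined with $h,h'\geq 0$, this makes the second correction integral $\leq 0$ in every dimension. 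For the first integral, the case $n=2$ follows directly by integrating the pointwise bounds over $\mathbb{S}^1$ using the elementary identity $\int_{\mathbb{S}^1}\cos^2=\int_{\mathbb{S}^1}\sin^2=\pi$: the chain $B\leq\pi/f\leq A/f^2$ then gives the required inequality. The case $n=3$ runs along the same lines but additionally uses the freedom left in choosing $a\in\mathbf{c}(r)^{\perp}$ together with spectral control of the positive-definite matrix $M(t):=\int\xi\xi^T\det\mathbb{A}(t,\xi)\,d\sigma$ (whose trace equals $V(t)=\int\det\mathbb{A}(t,\xi)\,d\sigma$, the area element of the geodesic sphere). Tracking equalities through the chain forces each singular value of $\mathbb{A}(t,\xi)$ to equal $f(t)$ for all $(t,\xi)$, and Definition \ref{defsp} identifies $B(p,r)$ isometrically with $\mathcal{B}_n(p^+,r)$.

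The main obstacle is the first-integral estimate in dimension $n=3$: the naive pointwise bound $\langle(\mathbb{A}^T\mathbb{A})^{-1}\nabla^{\mathbb{S}}\Phi_a,\nabla^{\mathbb{S}}\Phi_a\rangle\leq(1-\Phi_a^2)/f^2$ alone reduces the required inequality $B(t)\leq 2A(t)/f(t)^2$ to the auxiliary estimate $V(t)\leq 3A(t)$ for every $t\in[0,r]$, which does not hold for an arbitrary admissible $a$ and so forces a delicate direction-of-$a$ argument within the two-dimensional subspace $\mathbf{c}(r)^{\perp}$. It is precisely this step that resists extension to $n\geq 4$ and pins down the dimensional restriction $n\in\{2,3\}$ in the statement.
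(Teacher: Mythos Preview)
Your $n=2$ argument is correct and is in fact cleaner than the paper's: the paper simply quotes Weinstock's inequality $v_1\le 2\pi/L$ together with the length comparison $L\ge 2\pi f(r)$ and \textbf{Fact 3}, whereas you give a self-contained variational computation. The integration-by-parts identity for $R(u)-v_1(\mathcal{B}_n(p^+,r))$ is valid, and for $n=2$ the chain $B(t)=\int_{\mathbb S^1}\sin^2\theta\,J^{-1}\le \pi/f(t)\le f^{-2}\int_{\mathbb S^1}\cos^2\theta\,J=A(t)/f(t)^2$ does exactly what you say.

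The $n=3$ case, however, has a genuine gap. You correctly reduce the first correction term, via the Rauch bound $(\mathbb A^T\mathbb A)^{-1}\le f^{-2}\mathrm{Id}$, to the requirement $V(t)\le 3A(t)=3\,a^{T}M(t)a$ \emph{for every} $t\in(0,r)$, and you acknowledge that this fails for arbitrary admissible $a$. The remedy you propose --- a ``delicate direction-of-$a$ argument within $\mathbf c(r)^{\perp}$'' --- cannot succeed as stated: $a$ is a single vector, while $M(t)$ varies with $t$, and there is no reason the principal directions of $M(t)$ should be compatible across $t$, nor that the restriction of $M(t)$ to the fixed $2$-plane $\mathbf c(r)^{\perp}$ always has an eigenvalue $\ge \tfrac13\mathrm{tr}\,M(t)$. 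Even for a fixed $t$ this last inequality may fail (take eigenvalues $\epsilon,\epsilon,1$ and $\mathbf c(r)$ in the top-eigenvector direction).

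The paper avoids this obstacle by a two-step route. First, for $n=3$ it shows only the \emph{single} boundary inequality $\lambda_1^{c}(\partial B(p,r))\le 2/f^{2}(r)$, by a Borsuk--Ulam choice of a spherical harmonic $e=\langle\zeta,\cdot\rangle$ with $\int_{\partial B(p,r)}e=0$ and the crucial observation that in dimension $3$ the numerator $\int_{\mathbb S^2}|\nabla e|^2 J^{n-3}$ is independent of $J$ (since $n-3=0$). Second, it feeds this into Theorem \ref{maintheorem-2}, whose test function is \emph{not} $\psi(t)e_1(\xi)$ but $a_+(t)e_1(\xi)$, where $e_1$ is a first eigenfunction of $\partial B(p,r)$ (not a spherical harmonic) and $a_+(t)$ is a carefully modified radial profile built from $h(t)=\max\{d^{*}(t),\tfrac{f^2}{n-1}d^{\sharp}(t)\}$. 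This modification is precisely what replaces your would-be pointwise inequality in $t$ by a single condition at $t=r$, namely $h(r)=d^{*}(r)$, which is exactly the hypothesis (\ref{b1}). In short: your radial profile is too rigid for $n=3$; the paper's extra flexibility in $a_+(t)$ is what makes the argument close.
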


For the higher dimensional case, we have the following.

\begin{theorem} \label{maintheorem-2}
Assume that same notations have the same meaning as in Theorem
\ref{maintheorem-1}. For a given $n$-dimensional ($n\geq2$) complete
Riemannian manifold $(M,g)$ having a radial sectional curvature
upper bound $k(t)$ w.r.t. $p$, if the first non-zero closed
eigenvalues of the Laplacian on the boundary satisfy
\begin{eqnarray} \label{b1}
\lambda_{1}^{c}(\partial B(p,r))\leq
\lambda_{1}^{c}(\partial\mathcal{B}_{n}(p^{+},r))
\end{eqnarray}
with $r<\mathrm{inj}(p)$, then we have
\begin{eqnarray} \label{e3}
v_{1}(B(p,r))\leq v_{1}(\mathcal{B}_{n}(p^{+},r)).
\end{eqnarray}
 Equality
holds if and only if $B(p,r)$ is isometric to
$\mathcal{B}_{n}(p^{+},r)$.
\end{theorem}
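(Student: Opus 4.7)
Proof Proposal. My plan is to use the variational characterization (\ref{cha}) and construct an admissible test function on $B(p,r)$ whose Rayleigh quotient is at most $v_1(\mathcal{B}_n(p^+,r))$. First, I would analyze the minimizer on the model. Since $\mathcal{B}_n(p^+,r)=[0,r]\times_f\mathbb{S}^{n-1}$ is a warped product, separation of variables in $\Delta\varphi^+=0$ shows that the first non-zero Steklov eigenfunction factors as $\varphi^+(t,\xi)=h(t)\phi(\xi)$, where $\phi$ is a first closed eigenfunction of the round unit sphere $\mathbb{S}^{n-1}$ (eigenvalue $n-1$) and $h$ solves the Sturm--Liouville equation $(f^{n-1}h')'=(n-1)f^{n-3}h$ with $h(0)=0$. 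The Steklov condition then identifies $v_1(\mathcal{B}_n(p^+,r))=h'(r)/h(r)$, and integration by parts gives
\[
v_1(\mathcal{B}_n(p^+,r))=\frac{\int_0^r\bigl[(h')^2 f^{n-1}+(n-1)h^2 f^{n-3}\bigr]\,dt}{h(r)^2 f(r)^{n-1}}.
\]
In particular $\lambda_1^c(\partial\mathcal{B}_n(p^+,r))=(n-1)/f(r)^2$, so hypothesis (\ref{b1}) amounts to $\mu:=\lambda_1^c(\partial B(p,r))\leq(n-1)/f(r)^2$.

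Next, I would mirror this product structure on $B(p,r)$. Let $\phi_1$ be a first non-zero closed eigenfunction of the Laplacian on $\partial B(p,r)$, satisfying $\int_{\partial B(p,r)}\phi_1=0$ and $\int_{\partial B(p,r)}|\nabla\phi_1|^2=\mu\int_{\partial B(p,r)}\phi_1^2$. Since $r<\mathrm{inj}(p)$, the map $\xi\mapsto\exp_p(r\xi)$ is a diffeomorphism $S_p^{n-1}\to\partial B(p,r)$; denote by $\Phi$ the pull-back of $\phi_1$ to $S_p^{n-1}$. Define
\[
u(t,\xi):=h(t)\,\Phi(\xi),\qquad (t,\xi)\in[0,r]\times S_p^{n-1},
\]
with $h$ the radial function from the model. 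Since $h(0)=0$, the function $u$ is well-defined at $p$ and lies in $W^{1,2}(B(p,r))$; moreover $\int_{\partial B(p,r)}u=h(r)\int_{\partial B(p,r)}\phi_1=0$, so $u$ is admissible in (\ref{cha}).

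Then I would estimate $R(u):=\int_{B(p,r)}|\nabla u|^2/\int_{\partial B(p,r)}u^2$ and compare it to the model. In polar coordinates, the volume element is $\det\mathbb{A}(t,\xi)\,dt\,d\sigma$ and
\[
|\nabla u|^2=(h'(t))^2\Phi^2+h(t)^2\,\bigl[(\mathbb{A}^T\mathbb{A})^{-1}(t,\xi)\bigr]_{ab}\,\partial_a\Phi\,\partial_b\Phi.
\]
Under the radial sectional curvature upper bound, Rauch's comparison theorem yields $|\mathbb{A}(t,\xi)\zeta|\geq f(t)|\zeta|$ for all $\zeta\in\xi^\perp$, equivalently $\mathbb{A}^T\mathbb{A}\geq f(t)^2 I$ in the symmetric-matrix sense; hence $\det\mathbb{A}(t,\xi)\geq f(t)^{n-1}$ and $(\mathbb{A}^T\mathbb{A})^{-1}\leq f(t)^{-2}I$. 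Combining these with hypothesis (\ref{b1}) and the boundary identity above, I would compare the integrals of $(h')^2\Phi^2\det\mathbb{A}$ and $h^2|\nabla_T\Phi|^2\det\mathbb{A}$ over $[0,r]\times S_p^{n-1}$ against their model counterparts and deduce $R(u)\leq v_1(\mathcal{B}_n(p^+,r))$, which proves (\ref{e3}). For rigidity, equality throughout forces equality in Rauch's inequality, hence $\mathbb{A}(t,\xi)=f(t)I$ along every radial geodesic from $p$; by Definition \ref{defsp} this is equivalent to $B(p,r)$ being isometric to $\mathcal{B}_n(p^+,r)$.

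The main obstacle lies in the last step: the Jacobi-field estimates $\det\mathbb{A}\geq f^{n-1}$ and $(\mathbb{A}^T\mathbb{A})^{-1}\leq f^{-2}I$ pull in opposite directions when multiplied together in the tangential-gradient term $|\nabla_T\Phi|^2\det\mathbb{A}$, so a naive term-by-term majorization fails. Hypothesis (\ref{b1}) directly controls the tangential gradient only on the slice $\{t=r\}$, and the crucial technical task is to combine this boundary control with the frozen-tangential structure of $u$ (i.e.\ the factorization $u=h(t)\Phi(\xi)$) to propagate the inequality to every $t\in(0,r)$ via a Fubini-type decomposition along level sets of $t$ and a careful matching of the singular-value contributions of $\mathbb{A}(t,\xi)$ against the model weight $f(t)$.
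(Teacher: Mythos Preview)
Your setup coincides with the paper's: the model eigenfunction factors as $\psi(t)e(\xi)$ (you write $h$ for the radial part), and the trial function on $B(p,r)$ is a radial profile times the first closed eigenfunction $e_1$ of $\partial B(p,r)$, pulled back along geodesic rays. You have also correctly located the obstruction: in the Dirichlet integral the radial piece carries the weight $\det\mathbb A\geq f^{n-1}$, while the tangential piece carries $(\mathbb A^T\mathbb A)^{-1}\det\mathbb A$, and the two comparisons go in opposite directions.

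The gap is that you stop exactly where the work begins: your final paragraph names ``the crucial technical task'' but does not carry it out, so the argument is incomplete. The paper's device is \emph{not} to keep the model radial profile $\psi$ and try to propagate the boundary inequality inward; instead it replaces $\psi$ by a modified profile
\[
a(t)=\psi(t)\Bigl[\tfrac{f^{n-1}(t)}{h(t)}\Bigr]^{1/2}+\int_t^r\psi(s)\,\Bigl(\Bigl[\tfrac{f^{n-1}(s)}{h(s)}\Bigr]^{1/2}\Bigr)'ds,\qquad a_+(t)=\max\{a(t),0\},
\]
with $h(t)=\max\bigl\{d^*(t),\tfrac{f^2(t)}{n-1}d^\sharp(t)\bigr\}$, $d^*(t)=\int_{\mathbb S^{n-1}}e_1^2 J^{n-1}$, $d^\sharp(t)=\int_{\mathbb S^{n-1}}|\nabla e_1|^2_{\mathbb S^{n-1}}J^{n-3}$. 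The construction is engineered so that $a'(t)=\psi'(t)[f^{n-1}/h]^{1/2}$, giving $(a')^2 d^*\leq(a')^2 h=(\psi')^2 f^{n-1}$, which controls the radial term; and the Bishop-type inequality $f'J-J'f\leq0$ forces $(f^{n-1}/h)'\leq0$, so the integral correction is nonpositive, whence $a_+\leq\psi[f^{n-1}/h]^{1/2}$ and $a_+^2 d^\sharp\leq\psi^2 f^{n-1}d^\sharp/h\leq(n-1)\psi^2 f^{n-3}$, which controls the tangential term. Hypothesis~(\ref{b1}) enters only at $t=r$, to force $h(r)=d^*(r)$ so that the boundary $L^2$-norm of the trial function equals $\psi(r)^2 f^{n-1}(r)$. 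With both numerator pieces dominated by the model expressions and the denominator matching exactly, the Rayleigh quotient bound and the equality case follow. This modification of the radial profile is precisely the missing ingredient in your proposal.
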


\begin{remark}
\rm{ (1) For the purse of letting readers understand the assumption
(\ref{b1}) clearly, here we would like to give a brief introduction
to the closed eigenvalue problem of the Laplacian. For an open,
bounded, connected domain $D\subset M$, without boundary, on a given
Riemannian $n$-manifold $M$, the so-called \emph{closed eigenvalue
problem} of the Laplacian on $D$ is actually to find a nontrivial
solution to $\Delta u+\lambda^{c}u=0$ in $D$, $u\in W^{1,2}(D)$.  It
is well-known that $-\Delta$ only has discrete spectrum and all the
elements in the spectrum (i.e., eigenvalues) can be listed
non-decreasingly into a sequence tending to infinity, i.e.,
$0=\lambda_{0}^{c}(D)<\lambda_{1}^{c}(D)\leq\lambda_{2}^{c}(D)\leq\lambda_{3}^{c}(D)\leq\cdots\uparrow\infty$.
For each eigenvalue $\lambda^{c}$, the corresponding $u$ to the
equation $\Delta u+\lambda^{c}u=0$ is called the eigenfunction of
$\lambda^{c}$. Clearly, the eigenfunction of $\lambda_{0}^{c}(D)=0$
is a constant function over $D$. Besides, by the variational
principle, one knows that the first non-zero closed eigenvalue
$\lambda_{1}^{c}(D)$ can be characterized as follows
\begin{eqnarray*}
\lambda_{1}^{c}(D)=\inf\left\{\frac{\int_{D}|\nabla
u|^{2}}{\int_{D}u^{2}}\Big{|}u\in
 W^{1,2}(D),u\neq0,\int_{D}u=0\right\}.
\end{eqnarray*}
 (2) The assumption (\ref{b1}) is always true for lower dimensions $n=2,3$ (see pages 16-17 for details), so Theorem \ref{maintheorem-1} looks
 much more
natural and better than Theorem \ref{maintheorem-2}.\\
(3) As pointed out by Escobar \cite[page 145]{e3}, the restraint on
the injectivity radius is necessary. In fact, Escobar constructed a
geodesic ball $B(p,r)$, with $r>\mathrm{inj}(p)$, in a flat
two-dimensional torus such that $v_{1}(B(p,r))>\frac{1}{r}$, which
is strictly bigger than the first non-zero Steklov eigenvalue
$\frac{1}{r}$ of the Euclidean $2$-ball with radius $r$. \\
 (4) Escobar \cite[pages 109-111]{e2} used non-simply-connected annuli as
the example to explain the fundamental difference between the
Dirichlet eigenvalue problem and the Steklov eigenvalue problem of
the Laplacian, which implies that the research experience on the
Dirichlet eigenvalue problem might not be used in the study of the
Steklov eigenvalue problem \emph{directly}.  \\
 (5) Clearly, one can get Escobar's main conclusions \cite[Theorems 1 and
 2]{e3} by directly applying our Theorems \ref{maintheorem-1} and
 \ref{maintheorem-2} twice. Therefore, our conclusions here are
 sharper than Escobar's and cover them as special cases. Inspired by examples constructed in \cite{fmi,jm1,jm2},
 we would
 like to use the torus example below to
 let readers realize the advantage of our comparisons here \emph{intuitively}.

\begin{itemize}

\item Let $\{x,y,z\}$ be the Cartesian coordinates of the Euclidean $3$-space $\mathbb{R}^3$. Consider the ring
torus $\mathscr{T}$ given by
\begin{eqnarray*}
\left\{
\begin{array}{lll}
x=(1+\epsilon\cdot \cos v)\cos u,\\
y=\epsilon\cdot \sin v, \\
z=(1+\epsilon\cdot \cos v)\sin u, &\quad
\end{array}
\right.
\end{eqnarray*}
with $u,v\in[0,2\pi)$ and $0<\epsilon<1$. Clearly, $\mathscr{T}$ can
be obtained by rotating the circle $(x-1)^2+y^2=\epsilon^2$ with
respect to the $y$-axis. Denote this circle by $\mathscr{C}$. It is
not difficult to know that the Gaussian curvature of $\mathcal{T}$
is given by
 \begin{eqnarray*}
 K=\frac{\cos v}{\epsilon(1+\epsilon\cdot \cos v)}.
 \end{eqnarray*}
Without loss of generality, we can choose $\epsilon=\frac{1}{2}$,
and then $K=\frac{4\cos v}{2+\cos v}$, $\mathscr{T}$ is explicitly
expressed as
\begin{eqnarray*}
\left(1-\sqrt{x^2+z^2}\right)^2+y^2=\frac{1}{4}.
\end{eqnarray*}
 As shown clearly in \cite[Example 2.5.1, Sect. 2.5,
Chapt. 2]{jm1} (or using the method explained in \cite[Example
4.4]{jm2} for finding sharp lower bounds for the Gaussian
curvature), \footnote{In the $2$-dimensional case, the radial Ricci
curvature and the radial sectional curvature degenerate into the
Gaussian curvature. Hence, here if one wants to use our conclusion
Theorem \ref{maintheorem-1} for a given parameterized surface, the
only thing needed is finding upper bounds for the Gaussian curvature
- see Remark \ref{remark2-3} for the principle of getting optimal
bounds. By Theorem \ref{maintheorem-1}, we know that the more
sharper upper bounds found, the more shaper upper bounds can be
given for $v_{1}(B(p,r))$.} for the geodesic ball $B(p,r)$ on
$\mathscr{T}$, where $0\leq r<\frac{\pi}{2}$, \footnote{This range
of $r$ is used to make sure that $B(p,r)$ is within the cut-locus of
$p$, and then Theorem \ref{maintheorem-1} can be applied directly.
For the detailed reason, please check \cite[Example 2.5.1, Sect.
2.5, Chapt. 2]{jm1} or \cite[Example 4.4]{jm2}.} one can get the
following facts:
  \\Case 1. If $p$ is one of those
points which are farthest from the $y$-axis, that is, $p$ locates on
the circle $C_{1}$ in the $xoz$-plane defined by $x^2+z^2=9/4$.
Without loss of generality, we can choose $p$ to be the point
$(3/2,0,0)$, which implies that $p$ is also on the circle
$\mathscr{C}$. Clearly, the parameter $v$ satisfies $v=0$ at the
point $p$. In this case, the best upper bound for the Gaussian
curvature what we can choose is $K_{upper}^{1}=4/3$, which implies
the upper bound for $v_{1}(B(p,r))$ given by Theorem
\ref{maintheorem-1} is the same with the one determined by Escobar's
eigenvalue comparison \cite[Theorem 1]{e3}.
 \\ Case 2. If $p$ is one of those points which are nearest to the
$y$-axis, that is, $p$ locates on the circle $C_{2}$ in the
$xoz$-plane defined by $x^2+z^2=1/4$. Without loss of generality, we
can choose $p$ to be the point $(1/2,0,0)$, which implies
$p\in\mathscr{C}$. In this case, at $p$ , $v=\pi$ and the best upper
bound for the Gaussian curvature what we can choose is
$K_{upper}^{2}=\frac{4\cos(\pi-2t)}{2+\cos(\pi-2t)}$, $0\leq
t<\frac{\pi}{2}$, where $t:=d(p,\cdot)$ is the Riemannian distance
to $p$. Moreover, the model manifold is
$M^{+}_{2}:=[0,l)\times_{f_{2}(t)}\mathbb{S}^1$ with the base point
$p^{+}_{2}$, where $f_{2}(t)$ is the solution to the system
(\ref{ODE}) with $k(t)=K_{upper}^{2}$. By Theorem
\ref{maintheorem-1} and \textbf{Fact 2} in Section 2, we have
\begin{eqnarray} \label{czmd-1}
v_{1}(B(p,r))\leq
v_{1}(\mathcal{B}_{2}(p^{+}_{2},r))=\frac{1}{f_{2}(r)},
\qquad{\mathrm{for~any}~0<r<\frac{\pi}{2}}.
\end{eqnarray}
 However, if one wants to use Escobar's conclusion
\cite[Theorem 1]{e3}, the best constant upper bound of the Gaussian
curvature should be $\widetilde{K}_{upper}^{2}=\frac{4\cos
2r}{-2+\cos2r}:=k_{0}$ since the function
$\frac{4\cos(\pi-2t)}{2+\cos(\pi-2t)}$ is increasing on $[0,r)$,
and, in this setting, the model manifold is $\mathbb{S}^{2}(k_{0})$
if $k_{0}>0$, i.e., $\frac{\pi}{4}<r<\frac{\pi}{2}$; $\mathbb{R}^2$
if $k_{0}=0$, i.e., $k_{0}=\frac{\pi}{4}$; $\mathbb{H}^{2}(k_{0})$
if $k_{0}<0$, i.e., $0\leq r<\frac{\pi}{4}$, which is actually the
$2$-dimensional space form with constant curvature $k_{0}$. Since
spherically symmetric manifolds are natural generalization of space
forms, which leads to the fact that Escobar's model space here is
actually the special spherically symmetric surface
$\widetilde{M}^{+}_{2}:=[0,\widetilde{l})\times_{\widetilde{f}_{2}(t)}\mathbb{S}^1$
(endowed with a one-point compactification topology if $k_{0}>0$),
where
\begin{eqnarray*}
\widetilde{f}_{2}(t)=\left\{
\begin{array}{llll}
\frac{\sin\sqrt{k_0}t}{\sqrt{k_0}}, & \quad  \widetilde{l}=
\frac{\pi}{\sqrt{k_0}},
  & \quad k_0>0\\
 t, &\quad \widetilde{l}=+\infty, & \quad k_0=0 \\
\frac{\sinh\sqrt{-k_0}t}{\sqrt{-k_0}}, & \quad
\widetilde{l}=+\infty, &\quad k_0<0
\end{array}
\right.
\end{eqnarray*}
is a solution to the system (\ref{ODE}) with
$k(t)=k_0=\widetilde{K}_{upper}^{2}$. Hence, by using Escobar's
conclusion \cite[Theorem 1]{e3} and \textbf{Fact 2}, one has
\begin{eqnarray} \label{ESC-1}
v_{1}(B(p,r))\leq\frac{1}{\widetilde{f}_{2}(r)},\qquad{\mathrm{for~any}~0<r<\frac{\pi}{2}}.
\end{eqnarray}
Since $f_{2}(t)$, $\widetilde{f}_{2}(t)$ satisfy the system
(\ref{ODE}) with different curvature upper bounds $K_{upper}^{2}$
and $\widetilde{K}_{upper}^{2}$, by using the fact
\begin{eqnarray*}
K_{upper}^{2}<\widetilde{K}_{upper}^{2}=k_{0}, \qquad
0<t<r<\frac{\pi}{2}
\end{eqnarray*}
and the Sturm-Picone comparison theorem, we have
$f_{2}(r)>\widetilde{f}_{2}(r)$ for any $0<r<\frac{\pi}{2}$, i.e.,
\begin{eqnarray*}
\frac{1}{f_{2}(r)}<\frac{1}{\widetilde{f}_{2}(r)},\qquad{\mathrm{for~any}~0<r<\frac{\pi}{2}},
\end{eqnarray*}
which implies, in Case 2, our upper bound for $v_{1}(B(p,r))$ given
by (\ref{czmd-1}) is sharper than the one in (\ref{ESC-1})
determined by Escobar's eigenvalue comparison \cite[Theorem 1]{e3}.
 \\ Case 3. If $p$ is neither a point on the circle $C_{1}$ nor a
point on the circle $C_{2}$. Without loss of generality, we can
choose $p$ to be a point, which is different from the points
$(3/2,0,0)$ and $(1/2,0,0)$, on the circle $\mathscr{C}$. Assume
$v=\alpha$ at $p$ with $0<\alpha<\pi$ or $\pi<\alpha<2\pi$. By the
symmetry of $\mathscr{T}$ w.r.t. the $xoy$-plane, without loss of
the generality, we can assume $0<\alpha<\pi$. In this case, the best
upper bound for the Gaussian curvature what we can choose is
\begin{eqnarray*}
K_{upper}^{3}=\left\{
 \begin{array}{lll}
\frac{4\cos(\alpha-2t)}{2+\cos(\alpha-2t)}, \qquad
0\leq{t}\leq\frac{\alpha}{2},\\
\\
\frac{4}{3}, \qquad\qquad\quad\quad
\frac{\alpha}{2}<t<\frac{\pi}{2}.
 \end{array}
 \right.
\end{eqnarray*}
If one wants to use Escobar's conclusion \cite[Theorem 1]{e3}, the
best constant upper bound of the Gaussian curvature should be
\begin{eqnarray*}
\widetilde{K}_{upper}^{3}=\left\{
 \begin{array}{lll}
\frac{4\cos(\alpha-2r)}{2+\cos(\alpha-2r)}, \qquad
0\leq{t}<r\leq\frac{\alpha}{2},\\
\\
\frac{4}{3}, \qquad\qquad\quad\quad
\frac{\alpha}{2}<t<r<\frac{\pi}{2}.
 \end{array}
 \right.
\end{eqnarray*}
Clearly, $K_{upper}^{3}<\widetilde{K}_{upper}^{3}$ for
$0\leq{t}<r\leq\frac{\alpha}{2}$, and
$K_{upper}^{3}=\widetilde{K}_{upper}^{3}$ for
$\frac{\alpha}{2}<t<r<\frac{\pi}{2}$. Using a similar argument to
Case 2, we know that for $0\leq r<\frac{\pi}{2}$, the upper bound
for $v_{1}(B(p,r))$ given by Theorem \ref{maintheorem-1} is sharper
than the one determined by Escobar's eigenvalue comparison
\cite[Theorem 1]{e3}.

\end{itemize}
(6) By \cite[Lemma 2.1]{hm}, we know that for a given complete
surface $\Sigma$, which can be parameterized, and a geodesic ball
$B(p,r)$, with $r<\mathrm{inj}(p)$, on $\Sigma$, the optimal upper
bound for the Gaussian curvature is actually the maximal values of
the Gaussian curvature on geodesic circles $C(p,t)$ with center $p$
and radii $0<t<r$, and this optimal upper bound can be computed
\emph{numerically}. This fact tells us that one can use any
parameterized complete surface, not only the ring torus example
mentioned above, to show the advantage of Theorem
\ref{maintheorem-1} here. }
\end{remark}

Now, we would like to introduce our estimates for the first non-zero
eigenvalue of the Wentzell eigenvalue problem of the weighted
Laplacian, which can be seen as a natural generalization of the
classical Steklov eigenvalue problem. However, before that, we need
to briefly mention several notions introduced by Bakry and \'{E}mery
\cite{be}.

Let $(N,\langle\cdot,\cdot\rangle)$ be an $(n+1)$-dimensional
compact oriented Riemannian manifold with smooth boundary $\partial
N$. The triple $(N,\langle\cdot,\cdot\rangle,e^{-\phi}dv)$ is called
a metric measure space (MMS for short), where $\phi\in
C^{\infty}(N)$ is a smooth function defined on $N$, and $dv$ is the
volume element. As introduced by  Bakry-\'{E}mery \cite{be}, the
weighted Laplacian (or the drifting Laplacian) and the
$K$-dimensional Bakry-\'{E}mery Ricci curvature can be defined as
follows
\begin{eqnarray*}
\Delta_{\phi}=\Delta-\langle\cdot,\nabla\phi\rangle
\end{eqnarray*}
and
\begin{eqnarray*}
\mathrm{Ric}_{\phi}^{K}=\mathrm{Ric} +\mathrm{Hess}\phi
-\frac{d\phi\otimes d\phi}{K-n-1},
\end{eqnarray*}
where, with the abuse of notations, $\Delta$ and $\nabla$ denote the
Laplace and the gradient operators on $N$ respectively, and
$\mathrm{Hess}$ is the Hessian operator on $N$ associated to the
metric $\langle\cdot,\cdot\rangle$. Here $K>n+1$ or $K=n+1$ if
$\phi$ is a constant function. When $K=\infty$, one can defined the
so-called $\infty$-dimensional Bakry-\'{E}mery Ricci curvature
(simply, \emph{Bakry-\'{E}mery Ricci curvature} or \emph{weighted
Ricci curvature}) as follows
\begin{eqnarray*}
\mathrm{Ric}_{\phi}=\mathrm{Ric} +\mathrm{Hess}\phi.
\end{eqnarray*}
By abuse of the notation, denote also by $\vec{\eta}$ the outward
unit normal vector field along the boundary $\partial N$. Let
$i:\partial N\hookrightarrow N$ be the standard inclusion. For
$X,Y\in\mathcal{H}(\partial N)$, i.e., the set of tangent vector
fields on $\partial N$, the second fundament form $II$ associated to
$\vec{\eta}$ is given by
$\langle\vec{\eta},\nabla_{X}Y\rangle=II(X,Y)$, and the mean
curvature $H=\frac{1}{n}\mathrm{tr}{II}$ of $\partial N$ is actually
the average of the trace of the second fundamental form. Naturally,
one can define the so-called \emph{weighted mean curvature} of the
inclusion $i$ as
$H_{\phi}=H-\frac{1}{n}\langle\vec{\eta},\nabla\phi\rangle$ on the
MMS $(N,\langle\cdot,\cdot\rangle,e^{-\phi}dv)$ - see, e.g.,
\cite{mg} for this notion.

On the compact MMS $(N,\langle\cdot,\cdot\rangle,e^{-\phi}dv)$
mentioned above, consider the following eigenvalue problem with the
Wentzell-type boundary condition
\begin{eqnarray} \label{WEP}
\left\{
    \begin{array}{ll}
      \Delta_{\phi}u=0,\qquad\quad & {\rm{in}}~N,\\
     -\beta\bar{\Delta}_{\phi}u+\frac{\partial u}{\partial\vec{\eta}}=\tau u,~\qquad\quad &
{\rm{on}}~\partial N,
    \end{array}
  \right.
\end{eqnarray}
where $\bar{\Delta}_{\phi}$ is the weighted Laplacian on the
boundary $\partial N$. In fact, (\ref{WEP}) is called \emph{the
Wentzell eigenvalue problem of the weighted Laplacian}. When
$\phi=const.$, i.e., the non-zero constant function, $\Delta_{\phi}$
and $\bar{\Delta}_{\phi}$ degenerate into usual Laplacians $\Delta$
on $N$ and $\bar{\Delta}$ on the boundary $\partial N$ respectively,
and (\ref{WEP}) becomes
\begin{eqnarray} \label{WEP-1}
\left\{
    \begin{array}{ll}
      \Delta u=0,\qquad\quad & {\rm{in}}~N,\\
     -\beta\bar{\Delta}u+\frac{\partial u}{\partial\vec{\eta}}=\tau u,~\qquad\quad &
{\rm{on}}~\partial N,
    \end{array}
  \right.
\end{eqnarray}
which, for $\beta\geq0$, has discrete spectrum and all the
eigenvalues in the spectrum can be listed increasingly as follows
\begin{eqnarray*}
0=\tau_{0}<\tau_{1}\leq\tau_{2}\leq\tau_{3}\leq\cdots\uparrow\infty.
\end{eqnarray*}
Recently, some interesting estimates for eigenvalues $\tau_{i}$ of
the eigenvalue problem (\ref{WEP-1}) have been obtained - see, e.g.,
\cite{DKL,dwx,dmwx,wx}. Especially, when $\beta=0$, (\ref{WEP-1})
degenerates into the Steklov eigenvalue problem (\ref{SEP}).

It is not difficult to find out that the weighted version
(\ref{WEP}) with $\beta\geq0$ also has discrete spectrum and all the
eigenvalues in the spectrum can be listed increasingly as follows
\begin{eqnarray*}
0=\tau_{0,\phi}<\tau_{1,\phi}\leq\tau_{2,\phi}\leq\tau_{3,\phi}\leq\cdots\uparrow\infty.
\end{eqnarray*}
 Besides, by the variational principle, it is easy to know that
the first non-zero eigenvalue $\tau_{1,\phi}$ of the eigenvalue
problem (\ref{WEP}) can be characterized as follows
\begin{eqnarray} \label{cha-1}
 \tau_{1,\phi}=\min\left\{\frac{\int_{N}|\nabla u|^{2}+\beta\int_{\partial N}|\bar{\nabla}u|^{2}}{\int_{\partial N}u^{2}}\Big{|}u\in W^{1,2}(N),u\neq0,\int_{\partial N}u=0\right\},
\end{eqnarray}
 where $\bar{\nabla}$ is the gradient operator on $\partial N$. Here, we would like to point out one thing that \emph{all integrations in (\ref{cha-1}) should have weighted volume elements, and for
 convenience, we have dropped them}. For $\tau_{1,\phi}$, we can obtain
its lower and sharp upper bounds under suitable assumptions on the
$K$-dimensional Bakry-\'{E}mery Ricci curvature
$\mathrm{Ric}^{K}_{\phi}$, the weighted mean curvature $H_{\phi}$,
and the second fundamental form $II$ of the boundary $\partial N$ -
see Theorems \ref{theorem3-1} and \ref{theorem3-2} for details.

The paper is organized as follows. Some preliminary facts will be
mentioned in Section 2. Proofs for Theorems \ref{maintheorem-1} and
 \ref{maintheorem-2} will be shown carefully in Section 3. In Section 4, estimates for the first non-zero eigenvalue of the Wentzell eigenvalue problem of the weighted
Laplacian will be investigated. An open problem will be issued in
the last section.

\section{Some useful facts}
\renewcommand{\thesection}{\arabic{section}}
\renewcommand{\theequation}{\thesection.\arabic{equation}}
\setcounter{equation}{0} \setcounter{maintheorem}{0}

In this section, some facts will be mentioned in the purpose of
proving the main conclusions of this paper. However, before that,
let us recall several useful facts about spherically symmetric
manifolds.

By making the radial sectional curvature upper bound assumption, Mao
\cite[Theorem 2.3.2]{jm1} (see also \cite[Theorem 4.2]{fmi}) has
proven the following Bishop-type volume comparison theorem.

\begin{theorem} (\cite{fmi,jm1}, generalized Bishop's volume comparison theorem II) \label{BishopII}
Assume that $M$ has a radial sectional curvature upper bound
$k(t)=-\frac{f''(t)}{f(t)}$ w.r.t. $p\in{M}$ for
$t<\alpha\leq\min\{inj_{c}(p),l\}$, where
$inj_{c}(p)=\inf_{\xi}c_{\xi}$, with  $\gamma_{\xi}(c_{\xi})$  a
first conjugate point along the geodesic
$\gamma_{\xi}(t)=\exp_{p}(t\xi)$. Then on $(0,\alpha)$,
\begin{eqnarray}  \label{4vc}
\left(\frac{\sqrt{|g|}}{f^{n-1}}\right)'\geq0, \quad\quad
\sqrt{|g|}(t,\xi)\geq{f^{n-1}(t)},
 \end{eqnarray}
and  equality occurs in the first inequality at $t_{0}\in(0,\alpha)$
if and only if
 \begin{eqnarray*}
\mathcal{R}=-\frac{f''(t)}{f(t)}, \quad
\mathbb{A}=f(t)\mathrm{I}_{(n-1)\times(n-1)},
 \end{eqnarray*}
 on all of $[0,t_{0}]$.
\end{theorem}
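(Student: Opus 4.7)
The plan is to analyze $\sqrt{|g|}(t,\xi)=\det\mathbb{A}(t,\xi)$ through its logarithmic derivative and to compare it with the model scalar $f(t)$ coming from the Jacobi equation $f''+kf=0$. On $(0,\alpha)$ the matrix $\mathbb{A}(t,\xi)$ is invertible because $\alpha\leq\mathrm{inj}_{c}(p)$ rules out conjugate points along $\gamma_{\xi}$, so one may introduce the self-adjoint endomorphism
\[
U(t):=\mathbb{A}'(t,\xi)\,\mathbb{A}(t,\xi)^{-1}\colon\xi^{\perp}\to\xi^{\perp}.
\]
Self-adjointness of $U$ follows from a Wronskian calculation that uses the symmetry of $\mathcal{R}$ together with the initial conditions $\mathbb{A}(0)=0$, $\mathbb{A}'(0)=I$. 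Differentiating $\mathbb{A}'=U\mathbb{A}$ and inserting the Jacobi equation $\mathbb{A}''+\mathcal{R}\mathbb{A}=0$ yields the matrix Riccati identity $U'+U^{2}+\mathcal{R}=0$, and the hypothesis that the radial sectional curvature is bounded above by $k(t)=-f''/f$ translates exactly into the pointwise operator inequality $\mathcal{R}(t)\leq k(t)\,\mathrm{I}_{(n-1)\times(n-1)}$ on $\xi^{\perp}$.

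The model quantity $u(t):=f'(t)/f(t)$ satisfies the scalar Riccati equation $u'+u^{2}+k=0$, and the central analytic step is the trace inequality
\[
\mathrm{tr}\,U(t)\ \geq\ (n-1)\,u(t),\qquad t\in(0,\alpha),
\]
which is equivalent to $(\sqrt{|g|}/f^{n-1})'\geq 0$ since $(\log\det\mathbb{A})'=\mathrm{tr}\,U$ and $(\log f^{n-1})'=(n-1)u$. I would establish this via Rauch's theorem for matrix-valued Jacobi fields, or equivalently via the Index Lemma: for every unit $w\in\xi^{\perp}$, the quadratic form $\langle U(t)w,w\rangle$ admits a variational characterization in terms of the index form of Jacobi fields along $\gamma_{\xi}|_{[0,t]}$, and the operator bound $\mathcal{R}\leq k\mathrm{I}$ yields $\langle U(t)w,w\rangle\geq u(t)$. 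Summing over an orthonormal basis of $\xi^{\perp}$ produces the displayed trace inequality. Combined with the initial asymptotics $\mathbb{A}(t,\xi)=t\,\mathrm{I}+O(t^{2})$ and $f(t)=t+O(t^{2})$, which give $\sqrt{|g|}/f^{n-1}\to 1$ as $t\to 0^{+}$, this simultaneously delivers the monotonicity $(\sqrt{|g|}/f^{n-1})'\geq 0$ and the pointwise comparison $\sqrt{|g|}\geq f^{n-1}$.

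For the rigidity clause, suppose equality holds at some $t_{0}\in(0,\alpha)$; by monotonicity and the normalization at $0^{+}$, the ratio $\sqrt{|g|}/f^{n-1}$ must be identically $1$ on $[0,t_{0}]$, so $\mathrm{tr}\,U\equiv(n-1)u$ there. Subtracting the scalar Riccati $((n-1)u)'+(n-1)u^{2}+(n-1)k=0$ from the traced matrix Riccati $(\mathrm{tr}\,U)'+\mathrm{tr}(U^{2})+\mathrm{tr}\,\mathcal{R}=0$ then gives
\[
\bigl[\mathrm{tr}(U^{2})-(n-1)u^{2}\bigr]+\bigl[\mathrm{tr}\,\mathcal{R}-(n-1)k\bigr]=0.
\]
The first bracket is nonnegative by the Cauchy--Schwarz bound $\mathrm{tr}(U^{2})\geq(\mathrm{tr}\,U)^{2}/(n-1)=(n-1)u^{2}$, and the second bracket is nonpositive by the curvature hypothesis, which forces both to vanish identically on $[0,t_{0}]$. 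Equality in Cauchy--Schwarz yields $U(t)=u(t)\,\mathrm{I}$, equality in the curvature bound yields $\mathcal{R}(t)=k(t)\,\mathrm{I}=-\frac{f''(t)}{f(t)}\,\mathrm{I}$, and the relation $\mathbb{A}'=u\,\mathbb{A}$ integrated with the initial normalization $\mathbb{A}(t)/f(t)\to I$ as $t\to 0^{+}$ produces $\mathbb{A}(t,\xi)=f(t)\,\mathrm{I}$ on $[0,t_{0}]$. The main obstacle I anticipate is delivering the trace inequality rigorously through the Index Lemma and then making the ``single-point equality propagates to the interval'' step watertight: both nonnegativity controls in the displayed sign-chase must be in force simultaneously, and the Cauchy--Schwarz step must be handled carefully to pin $U$ down to a scalar multiple of the identity.
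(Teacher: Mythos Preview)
The paper does not give a proof of this theorem; it is quoted verbatim from the references \cite{fmi,jm1} as an input to the later arguments. So there is no ``paper's proof'' to compare against, and I evaluate your proposal on its own.

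Your treatment of the two inequalities in \eqref{4vc} is the standard matrix--Riccati argument and is correct: the eigenvalue-wise Rauch bound $\langle U(t)w,w\rangle\geq u(t)$ for every unit $w\in\xi^{\perp}$, summed over an orthonormal basis, gives $\mathrm{tr}\,U\geq (n-1)u$, hence $(\log(\sqrt{|g|}/f^{n-1}))'\geq0$; the initial asymptotics then yield $\sqrt{|g|}\geq f^{n-1}$.

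The rigidity argument, however, contains two genuine gaps. First, ``equality in the first inequality at $t_{0}$'' means that the \emph{derivative} $(\sqrt{|g|}/f^{n-1})'$ vanishes at the single point $t_{0}$. Monotonicity of the ratio together with its limit $1$ at $0^{+}$ does \emph{not} force the ratio to be identically $1$ on $[0,t_{0}]$: a nondecreasing function can have an interior critical point without being constant. Second, even if one grants $\mathrm{tr}\,U\equiv(n-1)u$ on an interval, your traced identity
\[
\bigl[\mathrm{tr}(U^{2})-(n-1)u^{2}\bigr]+\bigl[\mathrm{tr}\,\mathcal{R}-(n-1)k\bigr]=0
\]
with the first bracket $\geq 0$ and the second $\leq 0$ only says they are negatives of one another; it does not force both to vanish (take $1+(-1)=0$).

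The repair is to stay at the eigenvalue level and use a Wronskian monotonicity. From $\mathrm{tr}\,U(t_{0})=(n-1)u(t_{0})$ and $\lambda_{i}(U(t_{0}))\geq u(t_{0})$ you get $U(t_{0})=u(t_{0})\mathrm{I}$, hence for every unit $w$ the Jacobi field $Y_{w}=\mathbb{A}(\cdot,\xi)w$ satisfies $|Y_{w}|'(t_{0})/|Y_{w}|(t_{0})=u(t_{0})$. Now the Wronskian $W_{w}:=|Y_{w}|'\,f-|Y_{w}|\,f'$ obeys
\[
W_{w}'=\bigl(k-\langle\mathcal{R}\hat y,\hat y\rangle\bigr)\,|Y_{w}|\,f\;+\;\bigl(|Y_{w}'|^{2}-(|Y_{w}|')^{2}\bigr)\,\frac{f}{|Y_{w}|}\ \geq\ 0,
\qquad W_{w}(0)=0,
\]
so $W_{w}$ is nondecreasing from $0$; $W_{w}(t_{0})=0$ therefore forces $W_{w}\equiv0$ on $[0,t_{0}]$, and then $W_{w}'\equiv0$ gives both $\langle\mathcal{R}\hat y,\hat y\rangle\equiv k$ and $Y_{w}'\parallel Y_{w}$ on $[0,t_{0}]$. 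From this one reads off $\mathbb{A}=f\mathrm{I}$ and $\mathcal{R}=-\frac{f''}{f}\mathrm{I}$ on $[0,t_{0}]$. This is exactly the ``single-point equality propagates backward'' step you flagged as the obstacle; the point is that one needs the \emph{second} layer of monotonicity (of $W_{w}$), not just that of the ratio $\sqrt{|g|}/f^{n-1}$.
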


There are another three important facts for our model spaces, which
will be used later, we would like to list here.

\begin{itemize}

\item \textbf{Fact 1}: (\cite[page 706]{fmi}, \cite[page 379]{jm2}) By proposition 42 and corollary 43 of chapter 7 in \cite{k} or
subsection 3.2.3 of chapter 3 in \cite{p}, we know that the radial
sectional curvature, and the  component of the radial Ricci tensor
of the spherically symmetric manifold
$M^{\ast}=[0,l)\times_{f(t)}\mathbb{S}^{n-1}$ with the base point
$p$ are given by
\begin{equation} \label{radialcurvatures}
\begin{array}{ll}
\mathcal{K}(V,\frac{d}{dt})=R(\frac{d}{d t},V,\frac{d}{d t}, V)
=-\frac{f''(t)}{f(t)}&\mbox{~for}~~ V\in T_{\xi}\mathbb{S}^{n-1},
~|V|_{g}=1,\\
{\rm{Ric}}(\frac{d}{d t},\frac{d}{d t})=-(n-1)\frac{f''(t)}{f(t)}.
\end{array}
\end{equation}
Thus, Definition \ref{def1} (resp., Definition {\ref{def2}}) is
satisfied with equality in (\ref{2.4}) (resp., (\ref{2.5})) and
$k(t)=-f''(t)/f(t)$.

\item \textbf{Fact 2}: (\cite[Lemma 3]{e3})  Let $B_{r}$ be a
 ball in $\mathbb{R}^n$ endowed with a rotationally
invariant metric
\begin{eqnarray*}
dt^{2}+f^{2}(t)|d\xi|^{2},
\end{eqnarray*}
where, as before, $|d\xi|^{2}$ represents the round metric on the
unit sphere $\mathbb{S}^{n-1}\subseteq\mathbb{R}^{n}$. The first
nonconstant eigenfunction for the Steklov problem on $B_{r}$ has the
form
\begin{eqnarray*}
\varphi(t,\xi)=\psi(t)e(\xi)
\end{eqnarray*}
where $e(\xi)$ satisfies the equation $\Delta e+(n-1)e=0$ on
$\mathbb{S}^{n-1}$ and the function $\psi$ satisfies the
differential equation
\begin{equation*}
\begin{array}{ll}
\frac{1}{f^{n-1}(t)}\frac{d}{dt}\left(f^{n-1}(t)\frac{d}{dt}\psi(t)\right)-\frac{(n-1)\psi(t)}{f^{2}(t)}=0
&\mbox{~in}~~(0,r),\\
\psi'(r)=v_{1}(B_{r})\psi(r),~~~\psi(0)=0.
\end{array}
\end{equation*}

\item \textbf{Fact 3}: (\cite[Proposition 4]{e3}) Let $B_{r}$ be a
two-dimensional ball in $\mathbb{R}^2$ endowed with a rotationally
invariant metric
\begin{eqnarray*}
dt^{2}+f^{2}(t)|d\xi|^{2}.
\end{eqnarray*}
Then the first nonzero Steklov eigenvalue is $f^{-1}(r)$.

\end{itemize}

In order to get sharp bounds for the first non-zero eigenvalue
$\tau_{1,\phi}$ of the eigenvalue problem (\ref{WEP}) on the compact
MMS $(N,\langle\cdot,\cdot\rangle,e^{-\phi}dv)$ with boundary
$\partial N$, we need the following facts which have been proven in
\cite{bcp,bs}.

\begin{lemma} (\cite[Proposition
2.2]{bcp}) \label{IM-3} Assume that $u$ is a smooth function on
$N^{n+1}$, and  other same notations have the same meaning as those
at the end of Subsection \ref{sub1.2}. Then we have
\begin{eqnarray*}
|\mathrm{Hess}u|^{2}+\mathrm{Ric}_{\phi}(\nabla u,\nabla u)\geq
\frac{(\Delta_{\phi}u)^2}{K}+\mathrm{Ric}^{K}_{\phi}(\nabla u,\nabla
u)
\end{eqnarray*}
 for every $K>n+1$ or $K=n+1$ and $\phi$ is
constant. Moreover, equality holds if and only if $\mathrm{Hess}
u=\frac{\Delta u}{n+1}\langle\cdot,\cdot\rangle$ and $\langle\nabla
u,\nabla\phi\rangle=-\frac{K-n-1}{K}\Delta_{\phi}u$.
\end{lemma}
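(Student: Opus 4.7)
The plan is to reduce the stated Bochner-type inequality to two classical ingredients: a Cauchy--Schwarz bound on the trace of the Hessian, and a weighted scalar Cauchy--Schwarz inequality with weights $n+1$ and $K-n-1$. First, directly from the definitions of $\mathrm{Ric}_{\phi}$ and $\mathrm{Ric}^{K}_{\phi}$ one has
$$\mathrm{Ric}_{\phi}-\mathrm{Ric}^{K}_{\phi}=\frac{d\phi\otimes d\phi}{K-n-1},$$
so evaluating on $\nabla u$ and moving $\mathrm{Ric}^{K}_{\phi}(\nabla u,\nabla u)$ to the left reduces the claim to the purely algebraic inequality
$$|\mathrm{Hess}\,u|^{2}+\frac{\langle\nabla u,\nabla\phi\rangle^{2}}{K-n-1}\;\geq\;\frac{(\Delta_{\phi}u)^{2}}{K}.$$

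Next, since $\dim N=n+1$ and $\Delta u=\mathrm{tr}(\mathrm{Hess}\,u)$, Cauchy--Schwarz applied to the $n+1$ eigenvalues of $\mathrm{Hess}\,u$ gives $|\mathrm{Hess}\,u|^{2}\geq(\Delta u)^{2}/(n+1)$, with equality exactly when $\mathrm{Hess}\,u=\frac{\Delta u}{n+1}\langle\cdot,\cdot\rangle$. Separately, for positive weights $p=n+1$ and $q=K-n-1$ (so $p+q=K$) and arbitrary reals $a,b$, the elementary inequality
$$(a-b)^{2}\;\leq\;(p+q)\left(\frac{a^{2}}{p}+\frac{b^{2}}{q}\right)$$
is a direct Cauchy--Schwarz. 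Specialized to $a=\Delta u$ and $b=\langle\nabla u,\nabla\phi\rangle$, it becomes $(\Delta_{\phi}u)^{2}/K\leq(\Delta u)^{2}/(n+1)+\langle\nabla u,\nabla\phi\rangle^{2}/(K-n-1)$, which combined with the Hessian bound yields the reduced inequality, and hence the lemma.

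For the equality case, both steps must be equalities simultaneously. The Hessian step forces $\mathrm{Hess}\,u=\frac{\Delta u}{n+1}\langle\cdot,\cdot\rangle$, and the scalar step forces $a/p=-b/q$, i.e.\ $\Delta u/(n+1)=-\langle\nabla u,\nabla\phi\rangle/(K-n-1)$. Substituting into $\Delta_{\phi}u=\Delta u-\langle\nabla u,\nabla\phi\rangle$ and solving for $\langle\nabla u,\nabla\phi\rangle$ yields $\langle\nabla u,\nabla\phi\rangle=-\tfrac{K-n-1}{K}\Delta_{\phi}u$, as claimed. The degenerate case $K=n+1$ with $\phi$ constant is immediate: the $\phi$-terms vanish and the assertion reduces to the trace inequality $|\mathrm{Hess}\,u|^{2}\geq(\Delta u)^{2}/(n+1)$. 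No serious obstacle is anticipated; the only care needed is in algebraic bookkeeping and in checking that the two Cauchy--Schwarz equality conditions are compatible and together give the stated equality characterization.
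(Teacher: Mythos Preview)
Your argument is correct. The reduction via $\mathrm{Ric}_{\phi}-\mathrm{Ric}^{K}_{\phi}=\frac{d\phi\otimes d\phi}{K-n-1}$, the trace Cauchy--Schwarz $|\mathrm{Hess}\,u|^{2}\geq(\Delta u)^{2}/(n+1)$, and the weighted scalar inequality $(a-b)^{2}\leq(p+q)\bigl(a^{2}/p+b^{2}/q\bigr)$ with $p=n+1$, $q=K-n-1$ combine exactly as you describe, and the equality analysis is also correct.

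Note, however, that the paper does not supply its own proof of this lemma: it is quoted verbatim as Proposition~2.2 of \cite{bcp} and used as a black box. So there is no in-paper argument to compare against. Your proof is in fact the standard one (and is essentially the argument given in the cited reference): the same two Cauchy--Schwarz steps, with the same equality characterization. In that sense your proposal matches the intended proof of the original source rather than offering a genuinely different route.
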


By directly applying Lemma \ref{IM-3} and the generalized Reilly
formula shown in \cite{md}, one can obtain the following fact.

\begin{lemma}  \label{IM-1}
Assume that $u$ is a smooth function on $N^{n+1}$, $h=\frac{\partial
u}{\partial\vec{\eta}}$, $z=u|_{\partial N}$, $\bar{\nabla}$ is the
gradient operator on $\partial N$, and other same notations have the
same meaning as those at the end of Subsection \ref{sub1.2}. Then we
have
\begin{eqnarray*}
\int_{N}\frac{K-1}{K}\left[(\Delta_{\phi}u)^2-\mathrm{Ric}_{\phi}^{K}(\nabla
u,\nabla u)\right]\geq\int_{\partial N}
\left[2h\bar{\Delta}_{\phi}z+nH_{\phi} h^{2}+II(\bar{\nabla}
z,\bar{\nabla} z)\right]
\end{eqnarray*}
 for every $K>n+1$ or $K=n+1$ and $\phi$ is
constant. Moreover, equality holds if and only if $\mathrm{Hess}
u=\frac{\Delta u}{n+1}\langle\cdot,\cdot\rangle$ and $\langle\nabla
u,\nabla\phi\rangle=-\frac{K-n-1}{K}\Delta_{\phi}u$.
\end{lemma}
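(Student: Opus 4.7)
The plan is straightforward: this lemma is essentially a pointwise sharpening of the generalized Reilly-type identity of \cite{md}, obtained by inserting the Bochner-type lower bound from Lemma \ref{IM-3}. So I would organize the proof as a two-step combination.

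First, I would invoke the generalized Reilly formula for the weighted Laplacian from \cite{md}, which on the compact MMS $(N,\langle\cdot,\cdot\rangle,e^{-\phi}dv)$ with boundary $\partial N$ takes the form
\begin{eqnarray*}
\int_{N}\left[(\Delta_{\phi}u)^{2}-|\mathrm{Hess}\,u|^{2}-\mathrm{Ric}_{\phi}(\nabla u,\nabla u)\right]=\int_{\partial N}\left[2h\bar{\Delta}_{\phi}z+nH_{\phi}h^{2}+II(\bar{\nabla}z,\bar{\nabla}z)\right],
\end{eqnarray*}
where all integrals are taken with respect to the weighted volume and boundary measures $e^{-\phi}dv$ and $e^{-\phi}d\sigma$ (suppressed per the convention stated in the introduction). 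This identity is the weighted analogue of the classical Reilly formula and is exactly the bridge between interior Hessian/Ricci data and the boundary Wentzell-type terms that appear on the right-hand side of our claim.

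Second, I would apply Lemma \ref{IM-3} pointwise to replace the interior integrand. The lemma gives
\begin{eqnarray*}
|\mathrm{Hess}\,u|^{2}+\mathrm{Ric}_{\phi}(\nabla u,\nabla u)\geq \frac{(\Delta_{\phi}u)^{2}}{K}+\mathrm{Ric}_{\phi}^{K}(\nabla u,\nabla u),
\end{eqnarray*}
hence
\begin{eqnarray*}
(\Delta_{\phi}u)^{2}-|\mathrm{Hess}\,u|^{2}-\mathrm{Ric}_{\phi}(\nabla u,\nabla u)\leq \frac{K-1}{K}(\Delta_{\phi}u)^{2}-\mathrm{Ric}_{\phi}^{K}(\nabla u,\nabla u).
\end{eqnarray*}
Integrating this pointwise inequality over $N$ (with the weighted measure) and chaining with the Reilly identity above yields the desired bound on $\int_{\partial N}[\,2h\bar{\Delta}_{\phi}z+nH_{\phi}h^{2}+II(\bar{\nabla}z,\bar{\nabla}z)\,]$.

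Finally, for the equality case, since the Reilly identity is an equality, the only place where an inequality is introduced is Lemma \ref{IM-3}. Thus equality throughout forces the equality condition of that lemma to hold pointwise on $N$, namely $\mathrm{Hess}\,u=\frac{\Delta u}{n+1}\langle\cdot,\cdot\rangle$ together with $\langle\nabla u,\nabla\phi\rangle=-\frac{K-n-1}{K}\Delta_{\phi}u$; conversely, these conditions make the pointwise inequality an equality, and then the Reilly formula promotes the integrated version to an equality as well. There is no real obstacle here: the only delicate point is bookkeeping of the weighted measures and making sure the Reilly identity being cited is indeed the $\phi$-weighted one, so that the weighted Laplacian $\bar{\Delta}_{\phi}$ and weighted mean curvature $H_{\phi}$ appear on the boundary rather than their unweighted counterparts.
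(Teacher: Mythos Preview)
Your proposal is correct and is exactly the approach indicated in the paper, which states that Lemma~\ref{IM-1} follows ``by directly applying Lemma~\ref{IM-3} and the generalized Reilly formula shown in \cite{md}.'' The only minor point worth noting is that the pointwise substitution you carry out actually yields the slightly sharper left-hand side $\int_{N}\big[\tfrac{K-1}{K}(\Delta_{\phi}u)^{2}-\mathrm{Ric}_{\phi}^{K}(\nabla u,\nabla u)\big]$ rather than the form with $\tfrac{K-1}{K}$ distributed over the bracket; this discrepancy is immaterial in the paper's applications (where $\Delta_{\phi}u=0$ and $\mathrm{Ric}_{\phi}^{K}\geq 0$) and is consistent with the remark that the inequality is (2.5) of \cite{bs}.
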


\begin{remark}
\rm{ The first conclusion of Lemma \ref{IM-1} is actually (2.5) in
\cite{bs}. }
\end{remark}

Batista and Santos have proved the following conclusion, which is a
slight modification to \cite[Theorem 1.6]{hr}.

\begin{lemma} (\cite[Proposition 2.2]{bs}) \label{IM-2}
Let $N^{n+1}$ be a compact weighted Riemannian manifold with
nonempty boundary $\partial N$ and $\mathrm{Ric}^{K}_{\phi}\geq 0$.
If the second fundamental form of $\partial N$ satisfies $II\geq
c\mathrm{I}_{n\times n},$ in the quadratic form sense,  and
$H_{\phi}\geq \frac{K-1}{n}c,$
 then
 \begin{eqnarray*}
 \lambda_{1}^{c}(\partial N)\geq(K-1)c^2,
 \end{eqnarray*}
 where $\lambda_{1}^{c}$ is the first non-zero closed eigenvalue of the drifting Laplacian acting on functions on $\partial
 N$. The equality holds if and only if $N^{n+1}$ is isometric to a Euclidean ball
of radius $\frac{1}{c}$, $\phi$ is constant and $K=n+1$.
\end{lemma}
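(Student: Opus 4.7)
The plan is to follow the classical Lichnerowicz--Obata--Reilly strategy adapted to the weighted setting, with Lemma \ref{IM-1} playing the role of the generalized Reilly formula. Let $z$ be a first non-zero eigenfunction of $\bar{\Delta}_\phi$ on $\partial N$, so that $\bar{\Delta}_\phi z=-\lambda_1^c(\partial N)\,z$ and $\int_{\partial N}z=0$ (all integrals taken with the weighted measure). First I would solve the weighted Dirichlet problem
\begin{equation*}
\Delta_\phi u=0 \quad \text{in } N, \qquad u|_{\partial N}=z,
\end{equation*}
which has a smooth solution because $\Delta_\phi$ is a uniformly elliptic operator. Set $h:=\partial u/\partial\vec{\eta}$.

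Next I would feed $u$ into Lemma \ref{IM-1}. Since $\Delta_\phi u=0$ and $\mathrm{Ric}_\phi^K\geq 0$, the left-hand side is non-positive, yielding
\begin{equation*}
0\geq\int_{\partial N}\Bigl[2h\bar{\Delta}_\phi z+nH_\phi h^2+II(\bar{\nabla}z,\bar{\nabla}z)\Bigr].
\end{equation*}
Using $\bar{\Delta}_\phi z=-\lambda_1^c z$, the hypotheses $II\geq c\,\mathrm{I}_{n\times n}$ and $H_\phi\geq\frac{K-1}{n}c$, and the identity $\int_{\partial N}|\bar{\nabla}z|^2=\lambda_1^c\int_{\partial N}z^2$ (integration by parts against the weighted measure), this rearranges to
\begin{equation*}
2\lambda_1^c\int_{\partial N}hz\;\geq\;(K-1)c\int_{\partial N}h^2+c\,\lambda_1^c\int_{\partial N}z^2.
\end{equation*}

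Now I would apply the weighted Cauchy--Schwarz inequality $2\lambda_1^c hz\leq(K-1)c\,h^2+\frac{(\lambda_1^c)^2}{(K-1)c}z^2$ to the left-hand side. After cancellation of the $(K-1)c\int h^2$ term one is left with
\begin{equation*}
\frac{(\lambda_1^c)^2}{(K-1)c}\int_{\partial N}z^2\;\geq\;c\,\lambda_1^c\int_{\partial N}z^2,
\end{equation*}
and since $z\not\equiv 0$ and $\lambda_1^c>0$, this collapses to $\lambda_1^c(\partial N)\geq(K-1)c^2$, as claimed.

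The part I expect to require the most care is the rigidity statement. In the equality case, Cauchy--Schwarz forces $h=\frac{\lambda_1^c}{(K-1)c}\,z$ pointwise on $\partial N$, and $II=c\,\mathrm{I}_{n\times n}$ and $H_\phi=\frac{K-1}{n}c$ must hold identically on $\partial N$. Equality in Lemma \ref{IM-1} further forces $\mathrm{Hess}\,u=\frac{\Delta u}{n+1}\langle\cdot,\cdot\rangle=0$ and $\langle\nabla u,\nabla\phi\rangle=-\frac{K-n-1}{K}\Delta_\phi u=0$. Thus $\nabla u$ is parallel and non-trivial, and combined with the umbilical boundary of constant principal curvature $c$ and the Bochner-type information from Lemma \ref{IM-3}, I would conclude that $N^{n+1}$ is isometric to a Euclidean ball of radius $1/c$. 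Tracing the constants (in particular, the fact that $\mathrm{Hess}\,u=0$ must be compatible with equality in Lemma \ref{IM-3}) then forces $\phi$ to be constant and $K=n+1$; this last bookkeeping step is the main technical obstacle and is where I would have to be most careful, but it follows the rigidity scheme already used in \cite{bs,hr}.
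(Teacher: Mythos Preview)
The paper does not supply its own proof of this lemma; it is quoted verbatim from \cite[Proposition 2.2]{bs} (itself a weighted modification of \cite[Theorem 1.6]{hr}). Your argument is correct and is precisely the standard Reilly-type argument used in those references: solve the weighted harmonic extension of a first eigenfunction $z$, plug it into the generalized Reilly inequality (Lemma \ref{IM-1}), use the curvature and convexity hypotheses to reduce to a quadratic inequality in $\int h^{2}$, $\int hz$, $\int z^{2}$, and then optimize. Indeed, the very same manipulation appears inside the paper's own proof of Theorem \ref{theorem3-1} (see the chain of inequalities leading to (\ref{4-3})), so your method is fully aligned with the paper's toolkit.

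One small remark on the rigidity step: your deduction that $\Delta u=0$ (and hence $\mathrm{Hess}\,u=0$) from $\Delta_{\phi}u=0$ and $\langle\nabla u,\nabla\phi\rangle=0$ is correct, and the conclusion that $N$ is a Euclidean ball then follows from the existence of a nontrivial parallel vector field together with the totally umbilical boundary $II=c\,\mathrm{I}_{n\times n}$. The step forcing $\phi=\mathrm{const.}$ and $K=n+1$ is, as you note, bookkeeping from the equality case of Lemma \ref{IM-3}; this is exactly how \cite{bs} closes the argument, so there is no hidden obstacle.
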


\section{Proofs of
the Steklov eigenvalue comparisons} \label{PSEC}
\renewcommand{\thesection}{\arabic{section}}
\renewcommand{\theequation}{\thesection.\arabic{equation}}
\setcounter{equation}{0} \setcounter{maintheorem}{0}

In this section, with the help of results mentioned in Section 2, by
using a similar method to that shown by Escobar \cite{e3}, we can
give the proofs of Theorems \ref{maintheorem-1} and
\ref{maintheorem-2} as follows.

\vspace{3mm}

 \textbf{\emph{Proof of Theorem
\ref{maintheorem-2}}}. Let $\psi(t)e(\xi)$ be the eigenfunction for
the first non-zero eigenvalue $v_{1}(\mathcal{B}_{n}(p^{+},r))$ on
$\mathcal{B}_{n}(p^{+},r)$ given as \textbf{Fact 2}. By \textbf{Fact
2} also, we have
\begin{eqnarray*}
\psi'(t)=\frac{n-1}{f^{n-1}(t)}\int_{0}^{t}\psi(s)f^{n-3}(s)ds,
\end{eqnarray*}
which implies that $\psi$ is positive (resp., negative) in $(0,r)$
if $\psi'(t)$ is positive (resp., negative) in a neighborhood of
zero. So, the function $\psi$ determined in \textbf{Fact 2} does not
change sign in $(0,r)$. Without loss of generality, we can assume
$\psi\geq0$. Correspondingly, $\psi'(t)>0$.

Consider the test function $\varphi(t,\xi)=a_{+}(t)e_{1}(\xi)$,
where $e_{1}(\xi)$ is an eigenfunction of the first non-zero closed
eigenvalue $\lambda_{1}^{c}(\partial B(p,r))$ of the Laplacian on
the boundary $\partial B(p,r)$, and
\begin{eqnarray*}
a_{+}(t):=\max\{a(t),0\},
\end{eqnarray*}
\begin{eqnarray*}
a(t):=\psi(t)\left[\frac{f^{n-1}(t)}{h(t)}\right]^{1/2}+\int_{t}^{r}\psi(s)\left(\left[\frac{f^{n-1}(s)}{h(s)}\right]^{1/2}\right)'ds,
\end{eqnarray*}
with
$h(t):=\max\left\{d^{\ast}(t),\frac{f^{2}(t)}{n-1}d^{\sharp}(t)\right\}$
and
\begin{eqnarray*}
 d^{\sharp}(t)=\int_{\mathbb{S}^{n-1}}|\nabla
e_{1}|^{2}_{\mathbb{S}^{n-1}}(\xi)J^{n-3}(t,\xi),
 \end{eqnarray*}
\begin{eqnarray*}
  d^{\ast}(t)=\int_{\mathbb{S}^{n-1}}
e_{1}^{2}(\xi)\cdot\det\mathbb{A}(t,\xi)=\int_{\mathbb{S}^{n-1}}
e_{1}^{2}(\xi)\sqrt{|g|}(t,\xi)=\int_{\mathbb{S}^{n-1}}
e_{1}^{2}(\xi)J^{n-1}(t,\xi).
\end{eqnarray*}
Clearly, $h(t)$ is Lipschitz continuous and hence differentiable
almost everywhere. Besides, by Rayleigh's theorem and Max-min
principle, we have
\begin{eqnarray} \label{3-1}
\int_{\partial B(p,r)}|\nabla e_{1}|^{2}=\lambda_{1}^{c}(\partial
B(p,r))\cdot\int_{\partial B(p,r)}e_{1}^{2}
\end{eqnarray}
and
\begin{eqnarray}  \label{3-2}
\int_{\partial B(p,r)}e_{1}(\xi)=0.
\end{eqnarray}

By Theorem \ref{BishopII}, we have
\begin{eqnarray*}
\left(\frac{\sqrt{|g|}}{f^{n-1}}\right)'=\frac{(n-1)J^{n-2}}{f^{n}}(J'f-f'J)\geq0
\end{eqnarray*}
in $(0,r)$, which implies
\begin{eqnarray} \label{VCS}
f'J-J'f\leq0
\end{eqnarray}
in  $(0,r)$.

Now, we claim that
\begin{eqnarray} \label{claim-1}
\left[\frac{f^{n-1}(t)}{h(t)}\right]'\leq0
\end{eqnarray}
in $(0,r)$. This is because if $h(t)=d^{\ast}(t)$ in a neighborhood
of some point in $(0,r)$, then together with (\ref{VCS}), we have
\begin{eqnarray*}
h^{2}(t)\left[\frac{f^{n-1}(t)}{h(t)}\right]'=(n-1)d^{\ast}f^{n-2}f'-f^{n-1}(t)\cdot\left(d^{\ast}\right)'=(n-1)f^{n-2}\int_{\mathbb{S}^{n-1}}e_{1}^{2}J^{n-2}(f'J-J'f)\leq0.
\end{eqnarray*}
If $h(t)=\frac{f^{2}(t)}{n-1}d^{\sharp}(t)$ in a neighborhood of
some point in $(0,r)$, then together with (\ref{VCS}), we have
\begin{eqnarray*}
\left(d^{\sharp}(t)\right)^{2}\cdot\left[\frac{f^{n-3}(t)}{d^{\sharp}(t)}\right]'&=&(n-3)d^{\sharp}f^{n-4}f'-f^{n-3}(t)\cdot\left(d^{\sharp}\right)'\\
&=&(n-3)f^{n-4}\int_{\mathbb{S}^{n-1}}|\nabla
e_{1}|^{2}_{\mathbb{S}^{n-1}}J^{n-4}(f'J-J'f)\leq0,
\end{eqnarray*}
which implies $\left[\frac{f^{n-1}(t)}{h(t)}\right]'\leq0$.
Therefore, our claim (\ref{claim-1}) is true. So, the function
\begin{eqnarray*}
\int_{t}^{r}\psi(s)\left(\left[\frac{f^{n-1}(s)}{h(s)}\right]^{1/2}\right)'ds
\end{eqnarray*}
is negative for $t<r$, except when $J(t,\xi)=f(t)$. Together with
the definitions of $a(t)$ and $a_{+}(t)$, it follows that
\begin{eqnarray*}
a(t)\leq
a_{+}(t)\leq\psi(t)\left[\frac{f^{n-1}(t)}{h(t)}\right]^{1/2}
\end{eqnarray*}
on $(0,r)$. Using the above inequality, definitions of $h(t)$ and
$d^{\sharp}(t)$, one can get
\begin{eqnarray*}
\int_{B(p,r)}a_{+}^{2}(t)|\nabla
e_{1}|^{2}_{\mathbb{S}^{n-1}}J^{n-3}(t,\xi)dtd\sigma&=&\int_{0}^{r}a_{+}^{2}(t)\left(\int_{\mathbb{S}^{n-1}}|\nabla
e_{1}|^{2}_{\mathbb{S}^{n-1}}J^{n-3}(t,\xi)d\sigma\right)dt\\
&\leq&\int_{0}^{r}\psi^{2}\frac{f^{n-1}(t)d^{\sharp}(t)}{h(t)}dt\\
&\leq&(n-1)\int_{0}^{r}\psi^{2}f^{n-3}(t)dt,
\end{eqnarray*}
where, as before, $d\sigma$ denotes the $(n-1)$-dimensional volume
element on $\mathbb{S}^{n-1}$. Therefore, we have
\begin{eqnarray} \label{3-5}
\int_{B(p,r)}|\nabla\varphi|^{2}&=&\int_{B(p,r)}\left(a_{+}'(t)\right)^{2}e^{2}_{1}(\xi)J^{n-1}(t,\xi)dtd\sigma \nonumber\\
&&\qquad + \int_{B(p,r)}a_{+}^{2}(t)|\nabla
e_{1}|^{2}_{\mathbb{S}^{n-1}}J^{n-3}(t,\xi)dtd\sigma \nonumber\\
&\leq&\int_{B(p,r)}\left(\psi'(t)\right)^{2}\frac{f^{n-1}(t)}{h(t)}e^{2}_{1}(\xi)J^{n-1}(t,\xi)dtd\sigma+(n-1)\int_{0}^{r}\psi^{2}f^{n-3}(t)dt \nonumber\\
&=&\int_{0}^{r}\left(\psi'(t)\right)^{2}\left(\frac{\int_{\mathbb{S}^{n-1}}e^{2}_{1}(\xi)J^{n-1}(t,\xi)d\sigma}{h(t)}\right)f^{n-1}(t)dt+(n-1)\int_{0}^{r}\psi^{2}f^{n-3}(t)dt \nonumber\\
&\leq&\int_{0}^{r}\left(\psi'(t)\right)^{2}f^{n-1}(t)dt+(n-1)\int_{0}^{r}\psi^{2}f^{n-3}(t)dt.
\end{eqnarray}
On the other hand, by direct calculation, we have
\begin{eqnarray} \label{3-6}
\int_{\partial
B(p,r)}\varphi^{2}=a_{+}^{2}(r)\int_{\mathbb{S}^{n-1}}e_{1}^{2}(\xi)J^{n-1}(r,\xi)d\sigma=\frac{\psi^{2}(r)f^{n-1}(r)d^{\ast}(r)}{h(r)}.
\end{eqnarray}
Equality (\ref{3-1}) is equivalent to
\begin{eqnarray*}
\lambda_{1}^{c}(\partial B(p,r)) =\frac{d^{\sharp}(r)}{d^{\ast}(r)}.
\end{eqnarray*}
Together with the assumption (\ref{b1}), we have
\begin{eqnarray*}
d^{\sharp}(r)\leq d^{\ast}(r)\frac{n-1}{f^{2}(r)},
\end{eqnarray*}
which means $\frac{f^{2}(t)}{n-1}d^{\sharp}(t)\leq d^{\ast}(r)$. So,
one has $h(r)=d^{\ast}(r)$, and then (\ref{3-6}) becomes
\begin{eqnarray}  \label{3-7}
\int_{\partial B(p,r)}\varphi^{2}=\psi^{2}(r)f^{n-1}(r).
\end{eqnarray}
Combining (\ref{3-5}) and (\ref{3-7}) yields
\begin{eqnarray} \label{3-8}
\frac{\int_{B(p,r)}|\nabla\varphi|^{2}}{\int_{\partial
B(p,r)}\varphi^{2}}\leq\frac{\int_{0}^{r}\left(\psi'(t)\right)^{2}f^{n-1}(t)dt+(n-1)\int_{0}^{r}\psi^{2}f^{n-3}(t)dt}{\psi^{2}(r)f^{n-1}(r)}.
\end{eqnarray}
Since by (\ref{3-2}), the equality
\begin{eqnarray*}
\int_{\partial B(p,r)}\varphi=\psi(r)\int_{\partial
B(p,r)}e_{1}(\xi)=0
\end{eqnarray*}
holds, and then by applying \textbf{Fact 2}, the characterization
(\ref{cha}) and (\ref{3-8}), we can obtain
\begin{eqnarray*}
v_{1}(B(p,r))\leq
\frac{\int_{B(p,r)}|\nabla\varphi|^{2}}{\int_{\partial
B(p,r)}\varphi^{2}}\leq\frac{\int_{0}^{r}\left(\psi'(t)\right)^{2}f^{n-1}(t)dt+(n-1)\int_{0}^{r}\psi^{2}f^{n-3}(t)dt}{\psi^{2}(r)f^{n-1}(r)}=v_{1}(\mathcal{B}_{n}(p^{+},r)),
\end{eqnarray*}
which is exactly (\ref{e3}). It is easy to find that the equality in
the above inequality holds if and only if $f(t)=J(t,\xi)$, which, by
Theorem \ref{BishopII}, Definition \ref{defsp} and \textbf{Fact 1},
is equivalent to say that $B(p,r)$ is isometric to
$\mathcal{B}_{n}(p^{+},r)$. This completes the proof. \hfill
$\square$

\vspace{2mm}

Now, in order to prove Theorem \ref{maintheorem-1}, we only need to
confirm that the precondition (\ref{b1}) holds naturally for the
case of two and three dimensions.

\vspace{3mm}

 \textbf{\emph{Proof of Theorem
\ref{maintheorem-1}}}.  \label{ProofTH1.5}  We divide the proof into
two cases as follows:

Case 1. If $n=2$, i.e., $M$ is a $2$-dimensional Riemannian surface,
then $\partial B(p,r)$ is connected and diffeomorphic to
$\mathbb{S}^{1}$ because $r<\mathrm{inj}(p)$. Therefore, by
Weinstock's Theorem \cite{w1}, we have
\begin{eqnarray} \label{3-10-1}
v_{1}(B(p,r))\leq\frac{2\pi}{L}=\frac{2\pi}{\int_{0}^{2\pi}J(t,\xi)d\sigma}.
\end{eqnarray}
Here $L$ represents the perimeter of the closed curve $\partial
B(p,r)$, which is diffeomorphic to $\mathbb{S}^{1}$. Applying
Theorem \ref{BishopII}, from which one has $J(t,\xi)\geq f(t)$ on
$(0,r)$, and \textbf{Fact 3} to (\ref{3-10-1}), we can get
\begin{eqnarray*}
v_{1}(B(p,r))\leq\frac{2\pi}{\int_{0}^{2\pi}J(t,\xi)d\sigma}\leq\frac{2\pi}{\int_{0}^{2\pi}f(t)d\sigma}=\frac{1}{f(t)}=v_{1}(\mathcal{B}_{n}(p^{+},r)).
\end{eqnarray*}
Obviously, equality in the above inequality holds if and only if
$f(t)=J(t,\xi)$, which, by Theorem \ref{BishopII}, Definition
\ref{defsp} and \textbf{Fact 1}, is equivalent to say that $B(p,r)$
is isometric to $\mathcal{B}_{n}(p^{+},r)$.

Case 2. If $n=3$, i.e., $M$ is a  Riemannian $3$-manifold, then
$\partial B(p,r)$ is connected and diffeomorphic to $\mathbb{S}^{2}$
because $r<\mathrm{inj}(p)$. Let $e$ be any eigenfunction of the
first non-zero closed eigenvalue $n-1=2$ of the Laplacian on
$\mathbb{S}^{2}$. Therefore, $e=\langle\zeta,x\rangle$, where
$\zeta\in\mathbb{S}^{2}$, and $x$ represents Euclidean coordinates.
By direct calculation, we have
\begin{eqnarray} \label{3-10}
\frac{\int_{\partial B(p,r)}|\nabla e|^2}{\int_{\partial
B(p,r)}e^2}&=&\frac{\int_{\mathbb{S}^{n-1}}|\nabla
e|^{2}_{\mathbb{S}^{n-1}}J^{n-3}(r,\xi)d\sigma}{\int_{\mathbb{S}^{n-1}}e^{2}J^{n-1}(r,\xi)d\sigma} \nonumber\\
&=& \frac{\int_{\mathbb{S}^{2}}|\nabla
e|^{2}_{\mathbb{S}^{2}}d\sigma}{\int_{\mathbb{S}^{2}}e^{2}J^{2}(r,\xi)d\sigma}\nonumber\\
&\leq&\frac{1}{f^{2}(r)}\frac{\int_{\mathbb{S}^{2}}|\nabla
e|^{2}_{\mathbb{S}^{2}}d\sigma}{\int_{\mathbb{S}^{2}}e^{2}d\sigma}\nonumber\\
&=&\frac{2}{f^{2}(r)}=\lambda_{1}^{c}(\partial\mathcal{B}_{n}(p^{+},r)),
\end{eqnarray}
where the last inequality holds because $J(t,\xi)\geq f(t)$ on
$(0,r)$. On the other hand, define
$F:\mathbb{S}^{2}\rightarrow\mathbb{R}^2$ by
$F(e_{\zeta})=\int_{\partial
B(p,r)}e_{\zeta}=\int_{\mathbb{S}^2}e_{\zeta}J^{2}(r,\xi)d\sigma$,
and then we know that there must exist some $e\in\mathbb{S}^2$ such
that $F(e)=0$, i.e.,
\begin{eqnarray} \label{3-11}
\int_{\partial B(p,r)}e=\int_{\mathbb{S}^2}e\cdot
J^{2}(r,\xi)d\sigma.
\end{eqnarray}
This is because the function $F$ is continuous and
$F(e_{\zeta})=-F(-e_{\zeta})$ by choosing antipodal points. By
(\ref{3-11}), Rayleigh's theorem and Max-min principle,  one has
 \begin{eqnarray} \label{3-12}
 \lambda_{1}^{c}(B(p,r))\leq\frac{\int_{\partial B(p,r)}|\nabla e|^2}{\int_{\partial
B(p,r)}e^2}.
 \end{eqnarray}
Combining (\ref{3-10}) and (\ref{3-12}) results in
\begin{eqnarray*}
\lambda_{1}^{c}(B(p,r))\leq\lambda_{1}^{c}(\partial\mathcal{B}_{n}(p^{+},r)),
\end{eqnarray*}
and then by applying Theorem \ref{maintheorem-2} directly, we have
$v_{1}(B(p,r))\leq v_{1}(\mathcal{B}_{n}(p^{+},r))$. Clearly, the
rigidity conclusion for the equality $v_{1}(B(p,r))=
v_{1}(\mathcal{B}_{n}(p^{+},r))$ here can be attained by using
almost the same argument as the $2$-dimensional case.

This completes the proof. \hfill $\square$

\begin{remark}
\rm{It is easy to check that the method used in Case 2 here is also
valid for Case 1.}
\end{remark}

\section{Some eigenvalue estimates}
\renewcommand{\thesection}{\arabic{section}}
\renewcommand{\theequation}{\thesection.\arabic{equation}}
\setcounter{equation}{0} \setcounter{maintheorem}{0}

Estimates for the first non-zero eigenvalue $\tau_{1,\phi}$ of the
eigenvalue problem (\ref{WEP}) with $\beta\geq0$ will be given in
this section. In fact, we can obtain the followings.

\begin{theorem} \label{theorem3-1}
Assume that $(N,\langle\cdot,\cdot\rangle,e^{-\phi}dv)$ is an
$(n+1)$-dimensional compact connected MMS with smooth boundary
$\partial N$. If $\mathrm{Ric}_{\phi}^{K}\geq0$ and
$H_{\phi}\geq\frac{(K-1)c}{n}$, $II\geq c\cdot\mathrm{I}_{n\times
n}$ for some positive constant $c>0$, with $II$ the second
fundamental form of $\partial N$, then, for the eigenvalue problem
(\ref{WEP}) with $\beta\geq0$, we have
\begin{eqnarray} \label{TC4-1}
\tau_{1,\phi}\leq\beta\lambda_{1}^{c}+\frac{\sqrt{\lambda_{1}^{c}}\left[\sqrt{\lambda_{1}^{c}}+\sqrt{\lambda_{1}^{c}-(K-1)c^2}\right]}{(K-1)c},
\end{eqnarray}
where, similar as before,  $\mathrm{I}_{n\times n}$ represents the
$n\times n$ identity matrix, and $\lambda_{1}^{c}$ is the first
non-zero closed eigenvalue of the drifting Laplacian on the boundary
$\partial N$. Equality in (\ref{TC4-1}) holds if and only if  $N$ is
isometric to an $(n+1)$-dimensional Euclidean ball of radius
$\frac{1}{c}$, $\phi$ is the non-zero constant function, and
$K=n+1$.
\end{theorem}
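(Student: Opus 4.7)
The strategy is to build a test function for the variational characterization (\ref{cha-1}) and then use the generalized Reilly formula (Lemma \ref{IM-1}) to control the Dirichlet energy by the boundary data. Let $z$ be a first non-zero eigenfunction of the weighted Laplacian $\bar{\Delta}_{\phi}$ on $\partial N$, so that $\bar{\Delta}_{\phi}z=-\lambda_{1}^{c}z$ and $\int_{\partial N}z=0$, and let $u$ be the $\phi$-harmonic extension of $z$ to $N$, i.e., the solution of $\Delta_{\phi}u=0$ in $N$ with $u|_{\partial N}=z$. Writing $h=\partial u/\partial\vec{\eta}$ on $\partial N$ and applying weighted integration by parts yields $\int_{N}|\nabla u|^{2}=\int_{\partial N}zh$, so (\ref{cha-1}) gives
\begin{eqnarray*}
\tau_{1,\phi}\int_{\partial N}z^{2}\leq\int_{\partial N}zh+\beta\int_{\partial N}|\bar{\nabla}z|^{2}=\int_{\partial N}zh+\beta\lambda_{1}^{c}\int_{\partial N}z^{2}.
\end{eqnarray*}

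Next, I plug $u$ into Lemma \ref{IM-1}. Because $\Delta_{\phi}u=0$ and $\mathrm{Ric}_{\phi}^{K}\geq0$, the left-hand side is nonpositive, while the assumptions $H_{\phi}\geq(K-1)c/n$ and $II\geq c\,\mathrm{I}_{n\times n}$ let me estimate the boundary terms from below, which together with $\bar{\Delta}_{\phi}z=-\lambda_{1}^{c}z$ and $\int_{\partial N}|\bar{\nabla}z|^{2}=\lambda_{1}^{c}\int_{\partial N}z^{2}$ produces the quadratic-type inequality
\begin{eqnarray*}
(K-1)c\int_{\partial N}h^{2}+c\,\lambda_{1}^{c}\int_{\partial N}z^{2}\leq 2\lambda_{1}^{c}\int_{\partial N}zh.
\end{eqnarray*}
Setting $A=\int_{\partial N}h^{2}$, $B=\int_{\partial N}z^{2}$, $C=\int_{\partial N}zh$ and combining this inequality with Cauchy-Schwarz $C^{2}\leq AB$ gives $(K-1)c\,(C/B)^{2}-2\lambda_{1}^{c}(C/B)+c\,\lambda_{1}^{c}\leq0$. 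Note that the discriminant is nonnegative exactly because Lemma \ref{IM-2} forces $\lambda_{1}^{c}\geq(K-1)c^{2}$. Solving this quadratic for the larger root yields
\begin{eqnarray*}
\frac{C}{B}\leq\frac{\sqrt{\lambda_{1}^{c}}\bigl[\sqrt{\lambda_{1}^{c}}+\sqrt{\lambda_{1}^{c}-(K-1)c^{2}}\bigr]}{(K-1)c}.
\end{eqnarray*}

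Inserting this back into the variational estimate produces exactly the bound (\ref{TC4-1}). The rigidity statement is then obtained by tracing back the equalities: equality in (\ref{TC4-1}) forces equality simultaneously in Cauchy-Schwarz (so $h$ is a constant multiple of $z$ on $\partial N$), in the AM-GM step $(K-1)cA=c\lambda_{1}^{c}B$, and in Lemma \ref{IM-1} (so $\mathrm{Hess}\,u=\tfrac{\Delta u}{n+1}\langle\cdot,\cdot\rangle$ and $\langle\nabla u,\nabla\phi\rangle=-\tfrac{K-n-1}{K}\Delta_{\phi}u$). These together saturate the estimate in Lemma \ref{IM-2}, whose rigidity clause then identifies $N$ with the Euclidean ball of radius $1/c$, $\phi$ constant, and $K=n+1$.

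The main technical obstacle I anticipate is verifying that the discriminant is nonnegative and checking that the right branch of the quadratic is the correct upper bound, which hinges on the noncircular use of Lemma \ref{IM-2} to guarantee $\lambda_{1}^{c}\geq(K-1)c^{2}$. The rigidity argument also requires some care, since the equality in Lemma \ref{IM-1} together with $\Delta_{\phi}u=0$ must be reconciled with the constraints coming from the boundary geometry to invoke the rigidity half of Lemma \ref{IM-2}; the weighted integration by parts and the precise form of the Hessian equality are what make this reconciliation work.
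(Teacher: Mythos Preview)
Your proposal is correct and follows essentially the same route as the paper: the $\phi$-harmonic extension of a first eigenfunction $z$ as test function, the Reilly-type inequality of Lemma~\ref{IM-1} combined with Cauchy--Schwarz to produce a quadratic inequality, and Lemma~\ref{IM-2} both to ensure the discriminant is nonnegative and to handle rigidity. The only cosmetic difference is that the paper first bounds $\bigl(\int_{\partial N}h^{2}\bigr)^{1/2}$ and then applies H\"older, whereas you pass directly to a quadratic in $C/B$; for the equality case the paper makes your ``reconciliation'' step explicit by tracing $\mathrm{Hess}\,u=0$ on tangential directions to obtain $\bar{\Delta}_{\phi}z+nH_{\phi}h=0$, which forces $\lambda_{1}^{c}=(K-1)c^{2}$ and triggers the rigidity clause of Lemma~\ref{IM-2}.
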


\begin{proof}
Let $u$ be the solution to the following problem
\begin{eqnarray*}
\left\{
\begin{array}{ll}
\Delta_{\phi}u=0  \qquad\quad & {\rm{in}}~N,\\
u=z \qquad\quad & {\rm{on}}~\partial N,
\end{array}
\right.
\end{eqnarray*}
where $z$ is the eigenfunction of the first non-zero closed
eigenvalue $\lambda_{1}^{c}$ of the drifting Laplacian
$\bar{\Delta}_{\phi}$ on $\partial N$. That is,
$\bar{\Delta}_{\phi}z+\lambda_{1}^{c}z=0$ on $\partial N$,
$\int_{\partial N}z=0$. Set $h=\frac{\partial
u}{\partial\vec{\eta}}$. By (\ref{cha-1}) and the fact that
$\int_{\partial N}z=\int_{\partial N}u=0$, we have
 \begin{eqnarray} \label{4-2}
 \tau_{1,\phi}&\leq&\frac{\int_{N}|\nabla u|^2+\beta\int_{\partial N} |\bar{\nabla}z|^2}{\int_{\partial
 N}z^2} \nonumber\\
&=&\frac{\int_{\partial N}zh+\beta\int_{\partial N}
|\bar{\nabla}z|^2}{\int_{\partial
 N}z^2} \nonumber\\
 &=&\beta\lambda_{1}^{c}+\frac{\int_{\partial N}zh}{\int_{\partial
 N}z^2}
 \end{eqnarray}
 where the first equality holds by the usage of the divergence
 theorem. By Lemmas \ref{IM-3} and \ref{IM-1}, together with
 assumptions $\mathrm{Ric}_{\phi}^{K}\geq0$, $H_{\phi}\geq\frac{(K-1)c}{n}$, $II\geq
c\cdot\mathrm{I}_{n\times n}$ for some positive constant $c>0$, we
have
\begin{eqnarray*}
0\
&\geq&\int_{N}\frac{K-1}{K}\left[(\Delta_{\phi}u)^2-\mathrm{Ric}^{K}_{\phi}(\nabla u,\nabla u)\right]\\
&\geq&\int_{\partial N}\left[nH_{\phi}h^{2}+2h\bar{\Delta}_{\phi}z+II(\bar{\nabla}z,\bar{\nabla}z)\right]\\
&\geq&\int_{\partial N}\left[(K-1)ch^{2}-2\lambda_{1}^{c}zh+c|\bar{\nabla}z|^2\right]\\
&=&\int_{\partial
N}\left[(K-1)ch^{2}-2\lambda_{1}^{c}zh+c\lambda_{1}^{c}z^2\right],
\end{eqnarray*}
and then, by applying H\"{o}lder's inequality to the above
inequality, it follows that
\begin{eqnarray*}
0\
&\geq&\int_{\partial N}\left[(K-1)ch^{2}-2\lambda_{1}^{c}hz+c\lambda_{1}^{c}z^{2}\right]\\
&\geq&\int_{\partial N}(K-1)ch^2-2\lambda_{1}^{c}\left(\int_{\partial N}h^{2}\right)^\frac{1}{2}\left(\int_{\partial N}z^2\right)^\frac{1}{2}+\int_{\partial N}c\lambda_{1}^{c}z^{2}\\
&=&(K-1)c\left[\left(\int_{\partial
N}h^{2}\right)^\frac{1}{2}-\frac{\lambda_{1}^{c}}{(K-1)c}\left(\int_{\partial
N}z^2\right)^\frac{1}{2}\right]^2+
\left[c\lambda_{1}^{c}-\frac{(\lambda_{1}^{c})^2}{(K-1)c}\right]\int_{\partial
N}z^2,
\end{eqnarray*}
which is equivalent with
\begin{eqnarray} \label{4-3}
\left(\int_{\partial
N}h^2\right)^\frac{1}{2}\leq\frac{\sqrt{\lambda_{1}^{c}}\left[\sqrt{\lambda_{1}^{c}}+\sqrt{\lambda_{1}^{c}-(K-1)c^2}\right]}{(K-1)c}
\left(\int_{\partial N}z^2\right)^\frac{1}{2}.
\end{eqnarray}
Combining (\ref{4-2}) and (\ref{4-3}) yields
\begin{eqnarray*}
\tau_{1,\phi}\
&\leq&\beta\lambda_{1}^{c}+\frac{\int_{\partial N}zh}{\int_{N}z^2} \\
&\leq&\beta\lambda_{1}^{c}+\frac{\left(\int_{\partial N}h^2\right)^\frac{1}{2}}{\left(\int_{\partial N}z^2\right)^\frac{1}{2}}\\
&\leq&\beta\lambda_{1}^{c}+\frac{\sqrt{\lambda_{1}^{c}}\left[\sqrt{\lambda_{1}^{c}}+\sqrt{\lambda_{1}^{c}-(K-1)c^2}\right]}{(K-1)c},
\end{eqnarray*}
which is exactly (\ref{TC4-1}). If equality holds in (\ref{TC4-1}),
then all inequalities become equalities, and through the above
argument, one has $\mathrm{Hess}u=0$ and
\begin{eqnarray*}
h=\frac{\sqrt{\lambda_{1}^{c}}\left[\sqrt{\lambda_{1}^{c}}+\sqrt{\lambda_{1}^{c}-(K-1)c^2}\right]}{(K-1)c}z.
\end{eqnarray*}
Therefore, taking a local orthonormal fields $\{e_{i}\}^n_{i=1}$
tangent to $\partial N$, similar to the calculation in \cite[page
9]{bs}, we can obtain
\begin{eqnarray*}
0\
&=&\sum\limits^{n}_{i=1}\mathrm{Hess}u(e_{i},e_{i})\\
&=&\sum\limits^{n}_{i=1}\langle\nabla_{e_{i}}\nabla u,e_{i}\rangle\\
&=&\bar{\Delta}_{\phi}z+nH_{\phi}h\\
&=&-\lambda_{1}^{c}z+c(K-1)h\\
&=&-\lambda_{1}^{c}z+c(K-1)\frac{\sqrt{\lambda_{1}^{c}}\left[\sqrt{\lambda_{1}^{c}}+\sqrt{\lambda_{1}^{c}-(K-1)c^2}\right]}{(K-1)c}z,
\end{eqnarray*}
which implies $\lambda_{1}^{c}=(K-1)c^2$. Then, under the
assumptions for $\mathrm{Ric}^{K}_{\phi}$, $II$ and $H_{\phi}$, by
Lemma \ref{IM-2}, we know that $N$ is isometric to an
$(n+1)$-dimensional Euclidean ball of radius $\frac{1}{c}$, $\phi$
is a non-zero constant function and $K=n+1$. This completes the
proof.
\end{proof}

A lower bound for $\tau_{1,\phi}$ can also be obtained as follows.

\begin{theorem} \label{theorem3-2}
Assume that $(N,\langle\cdot,\cdot\rangle,e^{-\phi}dv)$ is an
$(n+1)$-dimensional compact connected MMS with smooth boundary
$\partial N$. If $\mathrm{Ric}_{\phi}^{K}\geq0$,
$H_{\phi}\geq\frac{(K-1)c}{n}$, $II\geq c\cdot\mathrm{I}_{n\times
n}$ for some positive constant $c>0$,  then, for the eigenvalue
problem (\ref{WEP}) with $\beta\geq0$, we have
\begin{eqnarray}  \label{TC4-2}
\tau_{1,\phi}>\frac{1}{2}c\left[1+(K-1)c\beta+\sqrt{(K-1)c^2\beta^2+2(K-1)c\beta}\right].
\end{eqnarray}
\end{theorem}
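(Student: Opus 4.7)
The plan is to apply the generalized Reilly-type identity of Lemma~\ref{IM-1} to an eigenfunction for $\tau_{1,\phi}$, rewrite the boundary integrals using the Wentzell boundary condition so that only the boundary trace remains, and reduce everything to a quadratic inequality in $\tau_{1,\phi}$ whose discriminant yields the claimed lower bound. Concretely, I would take $u$ to be a non-trivial eigenfunction of (\ref{WEP}) with eigenvalue $\tau := \tau_{1,\phi}$, and set $z = u|_{\partial N}$ and $h = \partial u/\partial\vec{\eta}$, so that $\Delta_{\phi}u = 0$ in $N$, $h = \tau z + \beta\bar{\Delta}_{\phi}z$ on $\partial N$, and $\int_{\partial N}z = 0$. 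Substituting $u$ into Lemma~\ref{IM-1}, the assumption $\mathrm{Ric}^{K}_{\phi} \geq 0$ makes the left-hand side non-positive, while the hypotheses $nH_{\phi} \geq (K-1)c$ and $II \geq c\,\mathrm{I}_{n\times n}$ bound the boundary integrand below, giving
\begin{equation*}
0 \;\geq\; \int_{\partial N}\bigl[\,2h\,\bar{\Delta}_{\phi}z + (K-1)c\,h^{2} + c\,|\bar{\nabla}z|^{2}\,\bigr].
\end{equation*}

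Next I would substitute $h = \tau z + \beta\bar{\Delta}_{\phi}z$ into this inequality and use the divergence theorem on the closed manifold $\partial N$ in the form $\int_{\partial N} z\,\bar{\Delta}_{\phi}z = -\int_{\partial N}|\bar{\nabla}z|^{2}$. Writing $I := \int_{\partial N}|\bar{\nabla}z|^{2}$, $J := \int_{\partial N}z^{2}$, and $M := \int_{\partial N}(\bar{\Delta}_{\phi}z)^{2}$, the estimate collapses to
\begin{equation*}
\bigl[\,2\tau(1+(K-1)c\beta) - c\,\bigr]\,I \;\geq\; (K-1)c\,\tau^{2}J + \beta\bigl[\,2 + (K-1)c\beta\,\bigr]\,M.
\end{equation*}
The key further ingredient is the Cauchy--Schwarz bound $I^{2} \leq JM$, coming from $I = -\int_{\partial N} z\,\bar{\Delta}_{\phi}z \leq J^{1/2}M^{1/2}$, which eliminates $M$. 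Setting $y := J/I > 0$ then recasts the estimate as
\begin{equation*}
(K-1)c\,\tau^{2}\,y^{2} - \bigl[\,2\tau(1+(K-1)c\beta)-c\,\bigr]\,y + \beta\bigl[\,2+(K-1)c\beta\,\bigr] \;\leq\; 0,
\end{equation*}
and for this to admit a positive solution its discriminant must be non-negative.

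The algebraic identity $(1+(K-1)c\beta)^{2} - (K-1)c\beta\,(2+(K-1)c\beta) = 1$ makes the discriminant condition telescope neatly to $4\tau^{2} - 4\tau c\,(1+(K-1)c\beta) + c^{2} \geq 0$, whose two roots are $\tfrac{c}{2}\bigl[\,1+(K-1)c\beta \pm \sqrt{\,(1+(K-1)c\beta)^{2}-1\,}\,\bigr]$; the upper root is precisely the right-hand side of (\ref{TC4-2}), and a continuity argument in $\beta$ (or comparison with the $\beta=0$ Steklov lower bound already implicit in Lemma~\ref{IM-2}) pins $\tau_{1,\phi}$ to the upper branch. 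The main obstacle will be upgrading ``$\geq$'' to the strict ``$>$'' claimed in (\ref{TC4-2}): equality throughout would force simultaneous saturation of Cauchy--Schwarz, making $z$ a pointwise multiple of $\bar{\Delta}_{\phi}z$ and hence an eigenfunction of $\bar{\Delta}_{\phi}$, and of the Reilly identity in Lemma~\ref{IM-1}, forcing $\mathrm{Hess}\,u$ proportional to the metric with $\langle\nabla u,\nabla\phi\rangle = -\tfrac{K-n-1}{K}\Delta_{\phi}u = 0$; combining these rigidity constraints with $II \geq c\,\mathrm{I}_{n\times n}$ and $\mathrm{Ric}^{K}_{\phi} \geq 0$ should be incompatible (in the spirit of the equality case of Theorem~\ref{theorem3-1}), which yields strictness.
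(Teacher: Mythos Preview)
Your main line---feed an eigenfunction into Lemma~\ref{IM-1}, substitute the Wentzell relation, and reduce to a discriminant---is correct and yields exactly the bound~(\ref{TC4-2}), but the algebra differs from the paper's. The paper writes the boundary condition as $\bar{\Delta}_{\phi}z=\gamma h-\tau z$ (with $\gamma=1/\beta$) and completes the square \emph{pointwise} in $h$, obtaining the quadratic condition on $\tau$ directly; you instead substitute $h=\tau z+\beta\bar{\Delta}_{\phi}z$, pass to the three integrals $I,J,M$, apply Cauchy--Schwarz $I^{2}\le JM$, and read off a quadratic in $y=J/I$. Your packaging actually carries a bonus you did not use: since the $y$-quadratic has positive leading coefficient and positive constant term (for $\beta>0$), a solution $y>0$ forces both roots positive, hence $2\tau(1+(K-1)c\beta)>c$, and this together with the discriminant condition \emph{automatically} excludes the lower $\tau$-branch---no continuity argument is needed, and your parenthetical appeal to Lemma~\ref{IM-2} is misplaced (that lemma bounds $\lambda_1^{c}(\partial N)$, not the $\beta=0$ Steklov eigenvalue). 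The paper instead rules out the lower branch by combining Lemma~\ref{IM-2} with the Case~1 estimate $\int_N|\nabla u|^2>\tfrac{c}{2}\int_{\partial N}z^2$. Where your sketch is genuinely incomplete is the strictness: invoking ``the spirit of Theorem~\ref{theorem3-1}'' is misleading, since (\ref{TC4-2}) is strict even on the Euclidean ball. What actually closes the argument (and what the paper does) is that equality in Lemma~\ref{IM-1} forces $\mathrm{Hess}\,u=0$, which via the mixed component $\mathrm{Hess}\,u(X,\vec{\eta})=0$ and the saturated bound $II(\bar{\nabla}z,\cdot)=c\langle\bar{\nabla}z,\cdot\rangle$ gives $h=cz$; feeding this and the Cauchy--Schwarz rigidity $\bar{\Delta}_{\phi}z=-\mu z$ back into the saturated Reilly identity forces $\mu=(K-1)c^{2}$ and hence $\tau=c(1+(K-1)c\beta)$, which strictly exceeds the upper root for $\beta>0$, a contradiction.
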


\begin{proof}
We use a similar proof to that of \cite[Theorem 1.3]{wx}. In order
to state precisely, we divide the proof into two cases as follows:

Case 1. Assume that $\beta=0$.  Let $u$ be an eigenfunction of the
first non-zero eigenvalue $\tau_{1,\phi}$ of the eigenvalue problem
(\ref{WEP}) with $\beta=0$. Set $h=\frac{\partial
u}{\partial\vec{\eta}}|_{\partial N}$, $z=u|_{\partial N}$. By
Lemmas \ref{IM-3} and \ref{IM-1}, together with the assumptions
$\mathrm{Ric}_{\phi}^{K}\geq0$, $H_{\phi}\geq\frac{(K-1)c}{n}$,
$II\geq c\cdot\mathrm{I}_{n\times n}$ for some positive constant
$c>0$, we have
\begin{eqnarray*}
0\
&\geq&\int_{\partial N}\frac{K-1}{K}\left[(\Delta_{\phi}u)^{2}-\mathrm{Ric}^{K}_{\phi}(\nabla u,\nabla u)\right]\\
&\geq&\int_{\partial N}\left[nH_{\phi}h^2+2h\bar{\Delta}_{\phi}z+II(\bar{\nabla}z,\bar{\nabla}z)\right]\\
&>&\int_{\partial N}\left[-2\langle\bar{\nabla} z,\bar{\nabla} h\rangle+c\langle\bar{\nabla}z,\bar{\nabla}z\rangle\right]\\
&=&\int_{\partial
N}\left[-2\tau_{1,\phi}\langle\bar{\nabla}z,\bar{\nabla}z\rangle+c\langle\bar{\nabla}z,\bar{\nabla}z\rangle\right],
\end{eqnarray*}
which implies
 \begin{eqnarray}  \label{4-5}
 \tau_{1,\phi}>\frac{c}{2}.
 \end{eqnarray}

Case 2. Assume that $\beta>0$. Let $u$ be an eigenfunction of the
first non-zero eigenvalue $\tau_{1,\phi}$ of the eigenvalue problem
(\ref{WEP}) with $\beta>0$. Set $\gamma=\frac{1}{\beta}$,
$\tau=\frac{\tau_{1,\phi}}{\beta}$, $h=\frac{\partial
u}{\partial\vec{\eta}}|_{\partial N}$, $z=u|_{\partial N}$. By
(\ref{WEP}), one has
\begin{eqnarray} \label{4-6}
\Delta_{\phi}u=0, \qquad \bar{\Delta}_{\phi}z=\gamma h-\tau z.
\end{eqnarray}
By Lemmas \ref{IM-3} and \ref{IM-1}, together with the assumption
$\mathrm{Ric}_{\phi}^{K}\geq0$, we have
\begin{eqnarray}  \label{4-7}
0\geq\int_{N}\frac{K-1}{K}\left[(\Delta_{\phi}u)^2-\mathrm{Ric}^{K}_{\phi}(\nabla
u,\nabla u)\right]\geq\int_{\partial
N}\left[nH_{\phi}h^2+2h\bar{\Delta}_{\phi}z+II(\bar{\nabla}z,\bar{\nabla}z)\right],
\end{eqnarray}
which, together with assumptions $H_{\phi}\geq\frac{(K-1)c}{n}$,
$II\geq c\cdot\mathrm{I}_{n\times n}$ for some positive constant
$c>0$, implies
\begin{eqnarray} \label{4-8}
0\
&\geq&\int_{\partial N}\left[(K-1)ch^{2}+2h(\gamma h-\tau z)+c|\bar{\nabla}z|^2\right] \nonumber\\
&=&\int_{\partial N}\left[(K-1)ch^{2}+2\gamma h^2-2\tau hz-cz\bar{\Delta}_{\phi}z \right]\nonumber\\
&=&\int_{\partial N}\left[c\tau z^2-(2\tau+c\gamma)hz+\left((K-1)c+2\gamma\right)h^2\right] \nonumber\\
&=&\left[(K-1)c+2\gamma\right]\int_{\partial
N}\left[h-\frac{(\tau+\frac{c\gamma}{2})z}{(K-1)c+2\gamma}\right]^{2}+\left[c\tau-\frac{\left(\tau+\frac{c\gamma}{2}\right)^2}{(K-1)c+2\gamma}\right]\int_{\partial
N}z^2.
\end{eqnarray}
Therefore, from the above inequality, one has
\begin{eqnarray*}
\left[\frac{(\tau+\frac{c\gamma}{2})^2}{(K-1)c+2\gamma}-c\tau\right]\int_{\partial
N}z^2\geq\left[(K-1)c+2\gamma\right]\int_{\partial N}\left[h-
\frac{(\tau+\frac{c\gamma }{2})z}{(K-1)c+2\gamma}\right]^2,
\end{eqnarray*}
which implies
\begin{eqnarray*}
\frac{(\tau+\frac{c\gamma}{2})^2}{(K-1)c+2\gamma}-c\tau\geq0.
\end{eqnarray*}
Solving the above inequality yields
\begin{eqnarray} \label{4-9}
\tau\geq\frac{\left[\sqrt{(K-1)^2c^2+2c(K-1)\gamma}+\gamma+(K-1)c\right]c}{2}
\end{eqnarray}
or
\begin{eqnarray} \label{4-10}
\tau\leq\frac{\left[\gamma+(K-1)c-\sqrt{(K-1)^2c^2+2c(K-1)\gamma}\right]c}{2}.
\end{eqnarray}
Now, we show that (\ref{4-10}) cannot happen. In fact, multiplying
the second equation of (\ref{4-6}) by $z$ and integrating over
$\partial N$, one can obtain
 \begin{eqnarray} \label{4-11}
\tau\int_{\partial N}z^{2}&=&\int_{\partial
N}|\bar{\nabla}z|^{2}+\gamma\int_{\partial N}hz \nonumber\\
&=& \int_{\partial N}|\bar{\nabla}z|^{2}+\gamma\int_{N}|\nabla
u|^{2},
 \end{eqnarray}
where the fact $\Delta_{\phi}u=0$ in $N$ has been used. Since
$\int_{\partial N}z=0$, $z\neq0$, by Lemma \ref{IM-2}, it follows
that
\begin{eqnarray} \label{4-12}
 \int_{\partial N}|\bar{\nabla}z|^{2}\geq\lambda^{c}_{1}(\partial N)\int_{\partial N}z^{2}\geq(K-1)c^{2}\int_{\partial N}z^{2}.
\end{eqnarray}
Combining (\ref{4-11}) and (\ref{4-12}), together with the fact
$\int_{N}|\nabla u|^{2}>\frac{c}{2}\int_{\partial N}z^{2}$, yields
 \begin{eqnarray*}
 \tau>(K-1)c^{2}+\frac{c\gamma}{2},
 \end{eqnarray*}
which implies that (\ref{4-10}) cannot hold. In order to get the
estimate (\ref{TC4-2}), one only needs to exclude the equality case
in (\ref{4-9}). We shall get this fact by contradiction. Suppose
that
\begin{eqnarray} \label{4-13}
\tau=\frac{\left[\sqrt{(K-1)^2c^2+2c(K-1)\gamma}+\gamma+(K-1)c\right]c}{2}.
\end{eqnarray}
From the above argument, we know that if (\ref{4-13}) holds, then
all inequalities in (\ref{4-7}) and (\ref{4-8}) should take equality
sign. Therefore, it follows that
\begin{eqnarray} \label{4-14}
h=\frac{(\tau+\frac{c\gamma}{2})z}{(K-1)c+2\gamma} \qquad
\mathrm{on}~\partial N,
\end{eqnarray}
 $II=c\mathrm{I}_{n\times n}$, i.e., all principal curvatures of $\partial N$ equal $c$, and, by
Lemma \ref{IM-1} and the first equation of (\ref{4-6}),
\begin{eqnarray} \label{4-15}
\mathrm{Hess}u=\frac{\Delta_{\phi}u-\frac{K-n-1}{K}\Delta_{\phi}u}{n+1}\langle\cdot,\cdot\rangle=0
\qquad \mathrm{on}~N.
\end{eqnarray}
Using the restriction of (\ref{4-15}) on the boundary, i.e.,
$\mathrm{Hess}z=0$ on $\partial N$, and $II=c\mathrm{I}_{n\times
n}$, one can get
\begin{eqnarray} \label{4-16}
h=cz
\end{eqnarray}
Combining (\ref{4-14}) and (\ref{4-16}) yields
\begin{eqnarray*}
\frac{\tau+\frac{c\gamma}{2}}{(K-1)c+2\gamma}=c,
\end{eqnarray*}
which, together with (\ref{4-13}), implies $\gamma=0$. This is a
contradiction. Hence, we have
\begin{eqnarray*}
\tau>\frac{\left[\sqrt{(K-1)^2c^2+2c(K-1)\gamma}+\gamma+(K-1)c\right]c}{2},
\end{eqnarray*}
which implies (\ref{TC4-2}). Besides, when $\beta=0$, (\ref{4-5}) is
equivalent with (\ref{TC4-2}). Summing up Cases 1 and 2, our
estimate (\ref{TC4-2}) can be achieved for the eigenvalue problem
(\ref{WEP}) with $\beta\geq0$.
\end{proof}

\begin{remark}
\rm{ When $\beta=0$, our estimate (\ref{TC4-1}) is exactly the main
estimate (1.6) in \cite{bs}, which means Theorem \ref{theorem3-1}
here covers \cite[Theorem 1.1]{bs} as a special case. When $\beta=0$
and $\phi=const.$ is a non-zero constant function, the Wentzell
eigenvalue problem (\ref{WEP}) of the weighted Laplacian degenerates
into the classical Steklov eigenvalue problem (\ref{SEP}), and
naturally $\tau_{1,\phi}=v_{1}$, i.e., the first non-zero Steklov
eigenvalue. In this setting, by Theorem \ref{theorem3-2}, one has
$v_{1}>\frac{c}{2}$, which is exactly Escobar's estimate in
\cite[Theorem 8]{e1}. That is to say, Theorem \ref{theorem3-2} here
covers Escobar's conclusion \cite[Theorem 8]{e1} as a special case.
}
\end{remark}

\section{An open problem}
\renewcommand{\thesection}{\arabic{section}}
\renewcommand{\theequation}{\thesection.\arabic{equation}}
\setcounter{equation}{0} \setcounter{maintheorem}{0}

If one checks Section \ref{PSEC} carefully, then naturally the
following interesting problem would be proposed.

\vspace{2mm}

\textbf{Open problem}. \emph{ For $n\geq4$, is the Escobar-type
Steklov eigenvalue inequality (\ref{e3}) also true without the
precondition (\ref{b1})?}

\vspace{5mm}

\section*{Acknowledgments}
\renewcommand{\thesection}{\arabic{section}}
\renewcommand{\theequation}{\thesection.\arabic{equation}}
\setcounter{equation}{0} \setcounter{maintheorem}{0}

This research was supported in part by the National Natural Science
Foundation of China (Grant Nos. 11401131 and 11801496), China
Scholarship Council, the Fok Ying-Tung Education Foundation (China),
the NSF of Hubei Provincial Department of Education (Grant No.
D20184301), and Hubei Key Laboratory of Applied Mathematics (Hubei
University). The eigenvalue comparisons here for the Steklov
eigenvalue problem have been already announced by Prof. Jing Mao in
a talk at School of Mathematical Sciences, Fudan University in May,
2017. The corresponding author, Prof. J. Mao, wants to thank the
Department of Mathematics, IST, University of Lisbon for its
hospitality during his visit from September 2018 to September 2019.

 \end{document}